\newtheorem{theorem}{Theorem}[section]
\newtheorem{proposition}[theorem]{Proposition}
\newtheorem{lemma}[theorem]{Lemma}
\newtheorem{corollary}[theorem]{Corollary}
\theoremstyle{definition}
\newtheorem{definition}[theorem]{Definition}
\numberwithin{equation}{section}
\newcommand\bb[1]{{\text{\bf#1}}}
\newcommand\bbz{\mathbb{Z}} 
\newcommand\bbr{\mathbb{R}} 
\newcommand\bbc{\mathbb{C}}
\newcommand\bbh{\mathbb{H}}
\newcommand\bbi{\bb{I}}
\begin{document}
\baselineskip=15pt

\title[Pseudo-real principal $G$-bundles over a real curve]{Pseudo-real principal 
$G$-bundles over a real curve}

\author[I. Biswas]{Indranil Biswas}

\address{School of Mathematics, Tata Institute of Fundamental
Research, Homi Bhabha Road, Bombay 400005, India}

\email{indranil@math.tifr.res.in}

\author[O. Garc\'{\i}a-Prada]{Oscar Garc\'{\i}a-Prada}

\address{Instituto de Ciencias Matem\'aticas, C/ Nicol\'as
Cabrera, no. 13--15, Campus Cantoblanco, 28049 Madrid, Spain}

\email{oscar.garcia-prada@icmat.es}

\author[J. Hurtubise]{Jacques Hurtubise}

\address{Department of Mathematics, McGill University, Burnside
Hall, 805 Sherbrooke St. W., Montreal, Que. H3A 2K6, Canada}

\email{jacques.hurtubise@mcgill.ca}

\subjclass[2000]{14P99, 53C07, 32Q15}

\keywords{Pseudo-real bundle, real form, flat
connection, polystability}

\date{}

\begin{abstract}
We consider stable and semistable principal bundles over a smooth projective real 
algebraic curve, equipped with a real or pseudo-real structure in the sense of 
Atiyah. After fixing appropriate topological invariants, one can build a suitable 
gauge theory, and show that the resulting moduli spaces of pseudo-real bundles are 
connected. This in turn allows one to describe the various fixed point varieties on 
the complex moduli spaces under the action of the real involutions on the curve and 
the structure group.
\end{abstract}

\maketitle

\section{Introduction}\label{sec1}

The moduli spaces of vector bundles, and more generally principal bundles, on algebraic 
curves are some of the most studied spaces in geometry. Their strong ties to physics, 
through gauge theory, and their intricate structure has motivated much work over the 
last fifty years. To cite only some, one has the pioneering work of Narasimhan and 
Seshadri \cite{NS} relating the moduli of vector bundles to representations of the 
fundamental group into the unitary group, re-proven in a gauge theoretic context by 
Donaldson \cite{Do}; the foundational work of Atiyah and Bott \cite{AtBo}, placing the 
moduli in a gauge theoretic context, giving a basis for detailed Morse theoretic 
calculations; the subsequent explorations of the ring structure of the cohomology of 
the moduli by Jeffrey and Kirwan \cite{JK}. In parallel, Ramanathan \cite{Ra} developed 
a suitable theory for $G$-bundles, which again was put into a gauge theoretic context 
by Ramanathan and Subramanian \cite{RS}.

This study has mostly focused on complex 
curves but has recently begun to be extended to the case of real bundles over real 
curves (in the sense of Atiyah \cite{At}), that is, to the case when one has an 
antiholomorphic involutions on the curve and the group, and one is looking at bundles
with antiholomorphic involution compatible with these two involutions. 
This leads to the moduli spaces of semistable vector
bundles over a smooth projective real algebraic curves.
An examination of the gauge theoretic aspects of real vector 
bundles was considered in Biswas, Huisman and Hurtubise \cite{BHH}, and exploited by 
Liu and Schaffhauser \cite {LS} to compute mod $2$ Betti numbers of the spaces; see also 
Baird \cite{Ba}.

These moduli spaces of real vector bundles sit naturally inside the corresponding complex moduli, and indeed this is 
an advantage when trying to understand their gauge theory. In particular, the notions of 
(semi)stability for the two cases are related, and this allows one to exploit the gauge theory 
used in the complex case to understand the real case. This ambient picture for principal 
bundles was considered over curves in \cite{BH}, and more generally for K\"ahler manifolds 
in \cite{BGH}. In special cases, the fixed point sets of various involutions turn out to be related to the string theorists'
branes; see \cite{BS1}, \cite{BS2}, \cite{BG}. There is another approach, adopted in \cite{BHH} and then in \cite{LS} for 
vector bundles, which consists in building the involution into the gauge theory. This 
approach gives much better information about the topology. Of course, to do this, we must 
first classify the possible involutions. The answer turns out to be fairly elaborate. We 
begin by establishing some notation.

Let $G$ be a connected reductive affine algebraic group defined over $\mathbb C$. The center
of $G$ will be denoted by $Z$.
Let $$\sigma_G\, :\, G\, \longrightarrow\, G$$ be 
a real form on $G$, meaning an antiholomorphic involution. Let $G_\bbr\,=\,
G^{\sigma_{_G}}\,\subset\, G$ be the corresponding real group, the fixed point set of the involution. Fix a
maximal compact subgroup $K_G\, \subset\, G$
such that $\sigma_{_G}(K_G)\,=\, K_G$ (such a compact subgroup exists). Also, fix an element
$c$ in $Z_\bbr\,=\, Z\bigcap G_\bbr$. Let $X$ be an irreducible smooth complex projective
curve of genus $g(X)$, equipped with an anti-holomorphic involution $\sigma_X$.
The pair $(X\, , \sigma_X)$ is then a smooth projective {\it real} algebraic curve.

\begin{definition}Let $E$ be a holomorphic principal $G$--bundle over $X$. We will say
that $E$ is {\it pseudo-real}, or {\it $c$-pseudo-real} if there is an antiholomorphic
lift $\sigma_E$ to the total space of $E$ of the involution $\sigma_X$ on $X$:
$$
\begin{matrix}
E & \stackrel{\sigma_E}{\longrightarrow} & E\\
 \Big\downarrow && \Big\downarrow \\
X& \stackrel{\sigma_X}{\longrightarrow} & X
\end{matrix}
$$
which is compatible with the group action in the sense that 
$$\sigma_E(p\cdot g)\,=\, \sigma_E(p)\cdot \sigma_G(g)\, ,$$
and satisfies the condition
$$ \sigma_E^2 (p)\,=\, p\cdot c\, .$$
The pair $(E\, , \sigma_E)$ is called {\it real} if $c$ is the identity element $e$.
\end{definition}
 
We will first examine the topological classification of such bundles. This turns 
out to be quite intricate, even for a bundle over a point. Indeed, over a point, 
one ends up computing some Galois cohomology; in the course of writing this paper, 
after working out several examples, we discovered some work of J. Adams \cite{Ad}, 
who does these computations more systematically, and ties them to ``strong real 
forms'' of the group \cite{ABV}. Our explicit examples allow one to extend the 
results to classification of topological types over circles, and hence to real 
surfaces.

Passing then to the algebraic description of our bundles, we define stable,
semistable and polystable pseudo-real principal $G$--bundles on $X$, and show
the following:

\begin{theorem}
Let the genus $g(X)$ be at least two. For each topological type of pseudo-real 
bundle, there is a connected moduli space of semi-stable bundles on $X$.
\end{theorem}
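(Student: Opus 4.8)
The plan is to realise the moduli space gauge-theoretically and then transport connectedness down from an affine space, following the strategy used for complex bundles by Atiyah–Bott \cite{AtBo} and Ramanathan–Subramanian \cite{RS} and for real vector bundles by Biswas–Huisman–Hurtubise \cite{BHH}. First I would fix a $C^\infty$ principal $G$-bundle $E_G$ underlying a pseudo-real bundle of the prescribed topological type, together with its smooth lift $\sigma_E$ of $\sigma_X$ satisfying $\sigma_E(p\cdot g)=\sigma_E(p)\cdot\sigma_{_G}(g)$ and $\sigma_E^2(p)=p\cdot c$; since $\sigma_{_G}(K_G)=K_G$, averaging produces a reduction of $E_G$ to $K_G$ preserved by $\sigma_E$. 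Let $\mathcal A$ be the space of holomorphic structures on $E_G$, an affine space modelled on $\Omega^{0,1}(X,\operatorname{ad}E_G)$. Because $\sigma_E$ intertwines the $G$-action with $\sigma_{_G}$, the assignment $\mathcal E\mapsto\overline{\sigma_E^*\mathcal E}$ defines a map $\tilde\sigma\colon\mathcal A\to\mathcal A$ which is affine with conjugate-linear linear part, and $\tilde\sigma^2$ is the action on $\mathcal A$ of the central gauge transformation $p\mapsto p\cdot c$, hence is the identity. So $\tilde\sigma$ is an affine, antiholomorphic involution of $\mathcal A$, and the pseudo-real holomorphic structures compatible with $\sigma_E$ are exactly its fixed points $\mathcal A^{\tilde\sigma}$. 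Since an affine involution always has a fixed point, e.g.\ $\tfrac12(a+\tilde\sigma a)$, the set $\mathcal A^{\tilde\sigma}$ is a nonempty real-affine subspace of $\mathcal A$, in particular connected.

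I would then bring in the Harder–Narasimhan stratification $\mathcal A=\mathcal A^{\mathrm{ss}}\sqcup\bigsqcup_\mu\mathcal A_\mu$, in which each unstable stratum $\mathcal A_\mu$ is a locally closed complex submanifold whose complex codimension, by the estimate of Atiyah–Bott and Ramanathan–Subramanian, is at least $2$ once $g(X)\ge 2$. As a pseudo-real holomorphic structure is semistable precisely when its underlying complex structure is — the canonical destabilising reduction being $\tilde\sigma$-equivariant by uniqueness — we have $\mathcal A^{\tilde\sigma,\mathrm{ss}}=\mathcal A^{\tilde\sigma}\cap\mathcal A^{\mathrm{ss}}$. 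Moreover, because $\sigma_X$ reverses orientation, $\overline{\sigma_X^*}$ preserves degrees, so $\tilde\sigma$ sends a holomorphic structure to one of the same Harder–Narasimhan type; as the strata are pairwise disjoint, any $\mathcal A_\mu$ meeting $\mathcal A^{\tilde\sigma}$ is $\tilde\sigma$-invariant. For such a stratum, $\tilde\sigma|_{\mathcal A_\mu}$ is an antiholomorphic involution of a complex manifold with nonempty fixed locus, so $\mathcal A^{\tilde\sigma}\cap\mathcal A_\mu$ is a totally real submanifold of real dimension $\dim_{\mathbb C}\mathcal A_\mu$, that is, of real codimension $\ge 2$ inside $\mathcal A^{\tilde\sigma}$. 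Consequently $\mathcal A^{\tilde\sigma,\mathrm{ss}}$ is obtained from the connected space $\mathcal A^{\tilde\sigma}$ by deleting a countable family of closed submanifolds of codimension $\ge 2$; hence it is nonempty — so semistable pseudo-real bundles of this type exist — and connected, by the same elementary transversality argument as in the complex and real-vector-bundle cases, carried out on suitable Sobolev completions.

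Finally, the moduli space of semistable pseudo-real bundles of the chosen topological type is a quotient of $\mathcal A^{\tilde\sigma,\mathrm{ss}}$ — by the real gauge group $\mathcal G^{\tilde\sigma}$, or equivalently by S-equivalence — and the corresponding projection is continuous and surjective, so connectedness descends, which is the assertion. I expect the real work to lie in two places. The first is making the $\sigma_E$-equivariant gauge theory precise: verifying that $\tilde\sigma$, a $\sigma_E$-invariant $K_G$-reduction, and the central twist by $c$ interact as claimed, and, crucially, that the topological types of pseudo-real bundles are exactly what indexes the affine spaces $\mathcal A$ — this is where the Galois-cohomological classification described earlier must be invoked. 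The second, and to my mind the main obstacle, is the codimension bound: one needs that for $g(X)\ge 2$ every Harder–Narasimhan stratum of $\mathcal A$ has complex codimension at least $2$, and that this ``$\ge 2$'' is inherited by the intersection with the real form $\mathcal A^{\tilde\sigma}$, since it is exactly this inequality that makes the removal of the unstable locus harmless for connectedness — for genus $0$ and $1$ the bound drops to $1$, which is why the hypothesis $g(X)\ge 2$ is imposed.
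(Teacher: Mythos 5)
Your proposal is correct and follows the same overall gauge-theoretic strategy as the paper: fix a smooth pseudo-real model of the given topological type together with a $\sigma_E$-invariant reduction to the compact form, observe that the space of invariant objects (connections, respectively holomorphic structures) is a real affine space and hence connected, use the equivalence between semistability of $(E,\sigma_E)$ and of the underlying complex bundle, and show that the unstable locus cannot disconnect once $g(X)\ge 2$. Where you differ is in the mechanism for that last step. The paper works downstairs on ${\mathcal A}^\sigma/{\mathcal K}^\sigma$ with the Yang--Mills functional: it invokes the equivariant extension of the Donaldson--Ramanathan theorem (via symmetric criticality) to identify the minimum with the moduli space ${\mathcal M}^\sigma_t$, and then estimates the Morse index of the higher critical points by Riemann--Roch applied to $H^1\bigl(X, (\underline{\mathfrak p}/\underline{\mathfrak l})^\vee\bigr)$, getting index $\ge -d + k(g-1)\ge 2$, so paths between minima can be pushed down. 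You instead work upstairs with the Harder--Narasimhan stratification of the space of holomorphic structures, note that the antiholomorphic involution permutes the strata so that the real unstable strata are totally real of real codimension equal to the complex codimension $\ge 2$, delete them, and descend connectedness through the quotient map onto the moduli space. The two implementations are essentially equivalent (Atiyah--Bott identify the Morse strata with the Harder--Narasimhan strata, and the numerical input is the same Riemann--Roch estimate), and both rest on the semistability comparison of Proposition 4.5. What your route buys is that you do not need the equivariant Narasimhan--Seshadri/Donaldson--Ramanathan identification of the moduli space with the Yang--Mills minimum; what it costs is that you must instead take the moduli space as a topological quotient of the invariant semistable locus (by the real gauge group and S-equivalence) and carry out the perturbation argument across locally closed strata in a Sobolev setting, points you correctly flag; the paper's Morse-theoretic version gets the identification of the moduli with the minima as part of the package. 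Neither difference affects correctness, and your two flagged obstacles (the equivariant topological classification indexing the affine spaces, and the codimension bound) are exactly the ingredients the paper supplies in its earlier sections and in the Riemann--Roch estimate.
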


\section{Topological classification of real curves}\label{se-top}

We recall the topological description of the possible real 
structures (i.e., anti-holomorphic involutions $\sigma_X$) on a Riemann surface
$X$ of genus $g$. More details can be found in \cite{BHH}.

These come in three types. For all three, one can write the surface $X$ 
as the union $X_0\bigcup \sigma_X(X_0)$ of two orientable surfaces with 
boundary, where the union is taken along the boundary. 

\begin{itemize} 
\item {\it Type 0}: This case is characterized by the fact that the real 
involution $\sigma_X$ has no fixed points. The quotient $X/\sigma_X$ is not 
orientable. In even genus, $X_0$ is obtained from a surface of genus 
$g/2$, by removing one disk; the boundary circle $\delta_1$ can be taken to be 
the concatenation of two intervals $I_0$ and $\sigma_X(I_0)$. In odd 
genus, the surface is obtained from a surface of genus $(g-1)/2$ by 
removing two disks. The boundary is then two circles, $\delta_1 
\delta_2\,=\, \sigma_X(\delta_1)$ interchanged by $\sigma_X$.

\item{\it Type I}: This case is characterized by the fact that the real 
involution $\sigma_X$ has $r>0$ fixed circles, and that the quotient 
$X/\sigma_X$ is orientable. The surface $X_0$ is simply the quotient 
$X/\sigma_X$ with boundary the $r$ fixed circles $\gamma_i$, $i\,=\,1,\cdots 
,r$.

\item{\it Type II}: This case is characterized by the fact that the real 
involution $\sigma_X$ has $r>0$ fixed circles, and that the quotient 
$X/\sigma_X$ is not orientable. The real involution $\sigma_X$ has $r$ fixed 
circles, and the quotient $X/\sigma_X$ is not orientable. The surface 
$X_0$ is of genus $(g -r-1)/2$, with $r+1$ disks removed. One of the boundaries $\delta_1$ 
can be written as the concatenation of two intervals $I_0$ and
$\sigma_X(I_0)$; the others are the $r$ fixed circles $\gamma_i$, $i 
\,=\,1,\cdots ,r$.
\end{itemize}
 
The surface $X_0$ has a standard decomposition into a union of cells, 
with all the $0$-cells on the boundary, $1$-cells which, apart from those in 
$\gamma_i, \delta_j $, have interiors lying in the interior of 
$X_0$ (these include the cells defining the standard first homology 
basis), and a single $2$-cell. The surface $X$ then has an induced 
decomposition for which, apart from the 0-cells and the 1-cells in 
$\gamma_i, \delta_i$, all cells come in pairs $c, \sigma_X(c)$.

\section{Classifying pseudo-real bundles}

\subsection{Normalizing $c$.}
The elements $a$ of the center $Z$ act on $E$ as automorphisms. If one modifies the map
$\sigma_E$ to $\sigma_E\cdot a$, one finds that the square $\sigma^2_E = c$ gets modified
to $c\sigma_G(a)a$. In particular, if $c\,=\, a^{-1}\sigma_G(a^{-1})$,
$a\, \in\, Z$, we can normalize $c$ to the identity $e$, i.e., make the structure real. We quotient out by this equivalence. Let us define 
\begin{equation}\label{2c}
H^2(\bbz/2\bbz,\, Z) \,=\, Z_\bbr /\{\sigma_G(a)a\,\mid\, a\,\in\, Z\}\, ;
\end{equation}
these classify the parameter $c$ in the definition of pseudo-real structure.
We note that all the elements of $Z_\bbr$ which
are squares map to zero in $H^2(\bbz/2\bbz,\, Z)$; all elements of
$H^2(\bbz/2\bbz,\, Z)$ are of
order two. Furthermore, we have a surjective map from the subgroup $Z_{\mathbb R}(2)
\, \subset\, Z_{\mathbb R}$ of order 2 points to 
 $Z_{\mathbb R}/\{a^2\,\mid\, a\,\in\, Z_{\mathbb R}\}$ and so to $H^2(\bbz/2\bbz,\, Z)$. Thus
we can suppose that all our elements of $H^2(\bbz/2\bbz,\, Z)$ are represented by elements of order 2, which, in particular will live in every maximal compact subgroup.
 
\subsection{Classifying bundles over a point}

Let us consider real $G$-bundles $E$ over a point. These are not the same as 
$G_\bbr$-bundles. The question hinges on whether the involution $\sigma_E$ on the 
principal bundle has a fixed point. Indeed, trivializing the bundle by
choosing a point $z_0$ of the bundle, let $\sigma_E(z_0) 
\,=\, z_0\cdot h$. One then has, by the defining property of real structures on bundles, that 
$$\sigma_E(z_0g)\,=\, z_0\cdot h\cdot \sigma_G(g)\, .$$ The involutive nature of $\sigma_E$ then
forces 
\begin{equation}\label{cocycle}
z_0\,=\, \sigma_E^2(z_0) \,=\,z_0\cdot h\cdot \sigma_G(h)\, .
\end{equation}
 
\begin{lemma}
Isomorphism classes of real $G$--bundles over a point are classified by the 
non-Abelian group cohomology $H^1(\bbz/2\bbz,\, G)$.
\end{lemma}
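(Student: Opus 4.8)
The plan is to exhibit an explicit bijection between the set of isomorphism classes of real $G$-bundles over a point and the pointed set $H^1(\bbz/2\bbz,\, G)$, where $\bbz/2\bbz$ acts on $G$ through the involution $\sigma_G$. Recall that for this action a $1$-cocycle amounts to a choice, for the nontrivial element $\tau\in\bbz/2\bbz$, of an element $h\in G$ with $h\cdot\sigma_G(h)\,=\,e$, and that two cocycles $h$, $h'$ are cohomologous precisely when $h'\,=\,g^{-1}h\,\sigma_G(g)$ for some $g\in G$; the quotient is $H^1(\bbz/2\bbz,\, G)$, with base point the class of $e$, corresponding to the standard real structure $\sigma_E\,=\,\sigma_G$ on the trivial bundle.

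First I would note that every holomorphic $G$-torsor over a point is trivial, so given $(E\, ,\sigma_E)$ we may pick a point $z_0\in E$ and write $\sigma_E(z_0)\,=\,z_0\cdot h$ as in the discussion preceding the lemma. Then \eqref{cocycle} says exactly that $h\cdot\sigma_G(h)\,=\,e$, i.e., $h$ is a $1$-cocycle. Next I would check independence of the trivialization: replacing $z_0$ by $z_0\cdot g$ and using $\sigma_E(p\cdot g)\,=\,\sigma_E(p)\cdot\sigma_G(g)$ gives $\sigma_E(z_0 g)\,=\,(z_0 g)\cdot\big(g^{-1}h\,\sigma_G(g)\big)$, so the associated cocycle changes by the coboundary $h\,\mapsto\,g^{-1}h\,\sigma_G(g)$. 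Hence $(E\, ,\sigma_E)\,\mapsto\,[h]\in H^1(\bbz/2\bbz,\, G)$ is well defined, and it depends only on the isomorphism class: an equivariant isomorphism $\phi\,:\,(E_1\, ,\sigma_{E_1})\,\to\,(E_2\, ,\sigma_{E_2})$ intertwining the involutions carries a trivializing point $z_0$ of $E_1$ to one of $E_2$ with the same cocycle, which we have just seen differs from any other choice by a coboundary.

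For surjectivity I would realize an arbitrary cocycle $h$ on the trivial torsor $E\,=\,G$ (with the standard right translation action) by setting $\sigma_E(x)\,:=\,h\cdot\sigma_G(x)$: one checks at once that this is antiholomorphic, that $\sigma_E(x\cdot g)\,=\,\sigma_E(x)\cdot\sigma_G(g)$, and that $\sigma_E^2(x)\,=\,h\,\sigma_G(h)\,x\,=\,x$, so $(G\, ,\sigma_E)$ is a real $G$-bundle over a point whose invariant is $[h]$. For injectivity, given real bundles with cohomologous cocycles $h_2\,=\,g^{-1}h_1\,\sigma_G(g)$, model both on $G$ as above and verify that left translation $x\,\mapsto\,g^{-1}x$ is a $G$-equivariant isomorphism intertwining $\sigma_{E_1}$ and $\sigma_{E_2}$. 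Together with the previous paragraph this gives the asserted bijection of pointed sets.

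This is the standard ``torsors are classified by $H^1$'' argument, so I do not expect a genuine obstacle; the only thing needing care is the left/right bookkeeping, so that the change-of-trivialization formula reproduces the coboundary convention $h\,\mapsto\,g^{-1}h\,\sigma_G(g)$ with the group elements on the correct sides (and hence $H^1$ rather than a twisted variant), and checking that the correspondence matches base points.
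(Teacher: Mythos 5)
Your argument is correct and is essentially the paper's proof (trivialize at a point $z_0$, read off $h$ from $\sigma_E(z_0)=z_0\cdot h$, get the cocycle condition from $\sigma_E^2=\mathrm{id}$, and identify change of trivialization with the coboundary relation), with the surjectivity and injectivity checks spelled out that the paper leaves implicit. Note only that the paper states the coboundary as $h\simeq \sigma_G(b)hb^{-1}$, which is the same equivalence relation as your $h\mapsto g^{-1}h\,\sigma_G(g)$ after substituting $b=\sigma_G(g)^{-1}$, so the conventions agree.
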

 
\begin{proof}
The cocycle condition for this cohomology is precisely \eqref{cocycle}, namely
$$
\{h\, \in\, G\, \mid\, \sigma_G(h)h\,=\, e\}\, .
$$
The coboundary equivalence, in turn, is given by $h\,\simeq\, \sigma_G(b)hb^{-1}$,
$b\, \in\, G$. But this corresponds to changing the base point $z_0$ to $z_0\cdot b$.
Therefore, the lemma follows.
\end{proof}
 
The equivalence class of $e$ makes the non-Abelian group cohomology $H^1(\bbz/2\bbz,\, G)$
a pointed set. The base point of $H^1(\bbz/2\bbz,\, G)$ corresponds to the trivial $G_\bbr$
bundle, in other words, it is the unique real $G$--bundle with a real point (a point
fixed by $\sigma_E$).

In a similar fashion to \eqref{cocycle}, for $c$-pseudo-real structures, one has shifted cocycles
\begin{equation}
z_0\cdot c\,=\, \sigma_E^2(z_0) \,=\,z_0\cdot h\cdot \sigma_G(h)\,=\,
z_0\cdot (\sigma_G(h)h)\, .\label{c-cocycle}
\end{equation}
Note that since $c$ is central, $\sigma_G(h)h\,=\, h\sigma_G(h).$ Quotienting the set of shifted cocycles out by a coboundary equivalence
$h\,\simeq\, b^{-1}h\sigma_G(b)$, $b\, \in\, G$, gives a set, the $c$-shifted non-Abelian
group cohomology $H^1_c(\bbz/2\bbz,\, G)$, and one obtains:
 
\begin{lemma}
Isomorphism classes of pseudo-real $G$-bundles over a point corresponding to an
element $c$ in $H^2(\bbz/2\bbz,\, Z)\,=\, Z_\bbr / \sigma_G(Z)\cdot Z$ are classified by the shifted non-Abelian
group cohomology $H^1_c(\bbz/2\bbz,\, G)$.
\end{lemma}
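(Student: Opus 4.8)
The plan is to imitate the proof of the preceding lemma (the case $c = e$), tracking the central twist throughout. First I would set up the correspondence between bundles and cocycles. Given a $c$-pseudo-real $G$-bundle $(E,\sigma_E)$ over a point, choose a trivializing point $z_0 \in E$ and write $\sigma_E(z_0) = z_0 \cdot h$. The compatibility with the group action gives $\sigma_E(z_0 g) = z_0 \cdot h \cdot \sigma_G(h) \cdot g$ after one more application, and the condition $\sigma_E^2(p) = p \cdot c$ evaluated at $p = z_0$ yields exactly the shifted cocycle equation $\sigma_G(h)h = c$ displayed in \eqref{c-cocycle}. Conversely, from any $h$ satisfying $\sigma_G(h)h = c$ one builds $(E,\sigma_E)$ by declaring $E$ to be $G$ with $\sigma_E(g) := h\sigma_G(g)$; the hypothesis $c \in Z_\bbr$ and the centrality of $c$ make this well-defined and involutive up to the prescribed $c$-twist. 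So $c$-pseudo-real structures over a point, rigidified by a choice of $z_0$, are in bijection with the set of shifted cocycles $\{h \in G \mid \sigma_G(h)h = c\}$.

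Next I would identify when two such data give isomorphic bundles. An isomorphism of $c$-pseudo-real bundles over a point is, after trivializing both, an element $b \in G$ intertwining the involutions; changing the base point $z_0 \rightsquigarrow z_0 \cdot b$ sends $h = $ (the element with $\sigma_E(z_0) = z_0 h$) to the element $h'$ with $\sigma_E(z_0 b) = z_0 b h'$, and a direct computation gives $h' = b^{-1} h \sigma_G(b)$. This is precisely the coboundary relation defining $H^1_c(\bbz/2\bbz, G)$ in the statement. One should check this relation indeed preserves the shifted cocycle condition: $\sigma_G(h') h' = \sigma_G(b)^{-1}\sigma_G(h) h \sigma_G(b)$ would be wrong in general, but since $c = \sigma_G(h)h$ is central it commutes past $\sigma_G(b)$, giving $\sigma_G(h')h' = c$ as needed. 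Assembling the two bijections, isomorphism classes of $c$-pseudo-real $G$-bundles over a point are in bijection with the quotient set $H^1_c(\bbz/2\bbz, G)$.

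Finally there is the small bookkeeping point that the parameter $c$ should be taken in $H^2(\bbz/2\bbz, Z) = Z_\bbr/\sigma_G(Z)\cdot Z$ rather than in $Z_\bbr$ itself: replacing $c$ by $c\,\sigma_G(a)a$ with $a \in Z$ corresponds to replacing $\sigma_E$ by $\sigma_E \cdot a$, as already observed in the subsection on normalizing $c$, and this gives a bijection between the shifted cohomology sets for cohomologous $c$'s — so the classification genuinely depends only on the class of $c$. I expect the main (though still modest) obstacle to be the careful verification that the coboundary action is well-defined on shifted cocycles and that the two bijections are mutually inverse; the centrality of $c$ is doing the real work at each step, exactly as flagged by the remark "since $c$ is central, $\sigma_G(h)h = h\sigma_G(h)$" just before the statement. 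Everything else is a transcription of the $c = e$ argument with the identity element replaced by $c$.
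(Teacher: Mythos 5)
Your proposal is correct and follows essentially the same route as the paper: derive the shifted cocycle condition $\sigma_G(h)h=c$ from $\sigma_E^2(z_0)=z_0\cdot c$ after trivializing at $z_0$, and identify the coboundary relation $h\simeq b^{-1}h\sigma_G(b)$ with a change of base point, exactly as in the paper's proof of the real ($c=e$) case, which the paper simply carries over. Your added verifications (that centrality of $c$ makes the coboundary action preserve the shifted cocycle set, and that the classification depends only on the class of $c$ in $H^2(\bbz/2\bbz,Z)$) are correct and consistent with the paper's remarks.
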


\begin{proposition}\label{prop2.1}
The set $H^1_c(\bbz/2\bbz,\, G)$ is discrete.
\end{proposition}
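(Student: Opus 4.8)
The plan is to realize $H^1_c(\mathbb{Z}/2\mathbb{Z},\, G)$ as the image of a finite set under a quotient, or else to exhibit it as a subset of a space of connected components. First I would reduce to the compact picture. By the normalization of $c$ in Section~3.1, we may assume $c$ is an element of order two lying in $K_G$, with $\sigma_G(K_G)=K_G$. The key classical input is that the Cartan decomposition $G \cong K_G \times \mathfrak{p}$ is $\sigma_G$-equivariant once $K_G$ is chosen $\sigma_G$-stable, so every $c$-shifted cocycle $h \in G$ with $\sigma_G(h)h = c$ can be deformed, within its coboundary class, to one lying in $K_G$; concretely, writing $h = k\exp(\xi)$ with $k \in K_G$ and $\xi \in \mathfrak{p}$, the cocycle equation forces a compatibility that lets one absorb the $\exp(\xi)$ factor by a coboundary $b = \exp(\xi/2)$ or a similar retraction. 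This step shows that the natural map $H^1_c(\mathbb{Z}/2\mathbb{Z},\, K_G) \to H^1_c(\mathbb{Z}/2\mathbb{Z},\, G)$ is surjective.

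Next I would argue that $H^1_c(\mathbb{Z}/2\mathbb{Z},\, K_G)$ is finite, which is the heart of the matter. The set of $c$-shifted cocycles in $K_G$, namely $\{h \in K_G \mid \sigma_G(h)h = c\}$, is a closed real-analytic subset of the compact group $K_G$, hence compact, and the coboundary action $h \mapsto b^{-1}h\sigma_G(b)$ is a smooth action of the compact group $K_G$ on it. It therefore suffices to show the orbits are open (equivalently, that the cocycle set has finitely many connected components and each component is a single orbit). Openness of orbits is a tangent-space computation: at a cocycle $h$, the tangent space to the cocycle set is the kernel of the linearization of $b \mapsto \sigma_G(b)h$ composed appropriately, and one checks that the tangent space to the orbit fills it out — this is the analogue of the standard fact that $H^1$ of a compact group with finite coefficients (or the relevant twisted version) is finite. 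The cleanest route is to twist: $\sigma_G$ composed with conjugation by $h$ defines a new involution $\tau_h$ on $K_G$ (using $\sigma_G(h)h=c$ central to check $\tau_h^2 = \mathrm{id}$ up to the central $c$, which acts trivially by inner automorphisms), and the orbit of $h$ is governed by $H^1(\mathbb{Z}/2\mathbb{Z}, K_G)$ with respect to $\tau_h$; since $K_G$ is compact with finitely many components and $\tau_h$ has reductive fixed subgroup, this twisted $H^1$ is finite — a classical result (Borel–Serre type finiteness, or directly via the fact that a compact group has finitely many conjugacy classes of involutions).

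Assembling these: the surjection $H^1_c(\mathbb{Z}/2\mathbb{Z},\, K_G) \twoheadrightarrow H^1_c(\mathbb{Z}/2\mathbb{Z},\, G)$ together with finiteness of the source shows the target is finite, hence discrete; and ``discrete'' here should be understood in the sense that each isomorphism class is an open-and-closed condition — there is no continuous moduli of pseudo-real $G$-bundles over a point. I expect the main obstacle to be the $K_G$-to-$G$ reduction, i.e.\ verifying carefully that the Cartan/KAK decomposition interacts with the $c$-shifted cocycle equation cleanly enough to contract the cocycle set of $G$ onto that of $K_G$ by coboundaries; the subtlety is that $\sigma_G$ is the antiholomorphic involution cutting out the real form, so $\mathfrak{p}$ is the $(+1)$-eigenspace of $d\sigma_G$ only after the right identification, and one must track signs so that the retraction $\xi \mapsto \xi/2$ genuinely lands in the coboundary group. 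Once that retraction is in hand, the finiteness of twisted $H^1$ for compact groups finishes the proof.
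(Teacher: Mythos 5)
There is a genuine gap, and it sits exactly at the point where you write ``finite, hence discrete.'' Finiteness of the set of classes does not imply that the quotient is discrete: what the proposition actually asserts (and what the paper uses later, e.g.\ to trivialize a family $h(t)$ of cocycles along a fixed circle, and in Corollary 3.4) is the local statement that each coboundary orbit is \emph{open} in the space of shifted cocycles $\{h\in G \mid \sigma_G(h)h=c\}$, equivalently that the infinitesimal deformation space at every cocycle vanishes. A finite orbit decomposition can perfectly well fail this: $\mathbb{R}^{\times}$ acting on $\mathbb{R}$ by scaling has three orbits but a non-discrete, non-Hausdorff quotient, so with only finiteness in hand a continuous family of cocycles could still degenerate from one class to another. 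You do name the needed step (``one checks that the tangent space to the orbit fills it out''), but you never carry it out; instead you substitute the twisting trick plus finiteness of $H^1(\mathbb{Z}/2\mathbb{Z},K)$ for compact $K$, and that substitution proves the wrong thing. Note also that the paper's logic runs in the opposite direction: discreteness is proved first, and finiteness (your main output) is then \emph{deduced} from it in Corollary 3.4 together with real algebraicity of the cocycle set.

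The missing computation is precisely the content of the paper's proof: at a cocycle $h$ with $h\sigma_G(h)=c$, the tangent space to the cocycle set is $\ker T$ with $T(v)=\mathrm{Ad}(h^{-1})(v)+d\sigma_G(v)$, the tangent space to the coboundary orbit is the image of $T'(w)=-\mathrm{Ad}(h)(w)+d\sigma_G(w)$, and since $d\sigma_G$ is antilinear one has $\ker T\cap\sqrt{-1}\,\ker T=0$ (and likewise for $T'$), forcing $\dim_{\mathbb R}\ker T\le\dim_{\mathbb C}\mathfrak g\le\dim_{\mathbb R}\mathrm{image}(T')$ and hence $\mathrm{image}(T')=\ker T$; equivalently, the relevant $H^1(\mathbb{Z}/2\mathbb{Z},\mathfrak g)$ vanishes because one can average over the order-two group. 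If you want to keep your twisting framework, the honest version of your argument would reduce openness of the orbit of $h$ to showing that every $\tau_h$-cocycle sufficiently close to $e$ is a coboundary, and proving that again comes down to this same linearization plus an implicit-function (or exponential/averaging) argument --- it is not a consequence of finiteness. Your other ingredients are fine but peripheral here: the reduction of a cocycle into $K$ via the Cartan decomposition is essentially the paper's next proposition, and finiteness of twisted $H^1$ for compact groups is a legitimate classical fact, but neither delivers the discreteness statement being asked for.
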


\begin{proof}
Let us look at the infinitesimal conditions, around a fixed element $h$ satisfying
\begin{equation}\label{cohcon}
c \,=\, h \sigma_G(h)\, .
\end{equation}
Consider $h(t) \,=\, h\exp(tv)$, where $v$ is a
function from a neighborhood of $0\, \in\, \mathbb R$ to the Lie algebra $\mathfrak g$
of $G$; the image of $0\, \in\, \mathbb R$ will be denoted by $v_0$. Impose the condition
that $h(t) \sigma_G(h(t))\,=\, c$, so
\begin{equation}\label{f1}
h\exp(tv)\sigma_G(h)\exp(td\sigma_G(v))\,=\, c\, ,
\end{equation}
where $d\sigma_G\,:\,
\mathfrak g\, \longrightarrow\, \mathfrak g$ is the homomorphism of Lie algebras
corresponding to $\sigma_G$. Since $\sigma_G(h)\,=\, h^{-1}c$, and $c$ is in the
center, from \eqref{f1} we have
$$
h\exp(tv)h^{-1}\exp(td\sigma_G(v))\,=\, e\, .
$$
Therefore, taking derivative at $t\,=\, 0$, we have
\begin{equation}\label{f2}
\text{Ad}(h^{-1})(v_0)+ d\sigma_G(v_0)\,=\, 0 
\end{equation}
(recall that $v_0\,=\, v(0)$).

Next consider the tangent space to the orbit $\{b^{-1}h\sigma_G(b)\}_{b\in G}$ at
$h$. Write $b(t)\,=\, \exp(tw)$ with $w$ is a
function from the neighborhood of $0\, \in\, \mathbb R$ to $\mathfrak g$.
Taking derivative of $\exp(-tw)h\exp(tw)$ at $t\,=\, 0$ we have
\begin{equation}\label{f3}
v_0\, :=\, \text{Ad}(h)(-w_0)+ d\sigma_G(w_0)\, ,
\end{equation}
where $w_0 \,=\, w(0)$. From \eqref{cohcon} we have
\begin{equation}\label{f4}
(d\sigma_G)\circ \text{Ad}(h) -\text{Ad}(h^{-1})\circ d\sigma_G\,=\, 0\, ,
\end{equation}
because $\sigma_G\circ\text{Ad}(h)(\exp(u))\,=\, h\sigma_G(\exp(u))h^{-1}\,=\,
\text{Ad}(h^{-1})\circ\sigma_G (\exp(u))$. Note that from \eqref{f4} it follows
immediately that $v_0$ in \eqref{f3} satisfies the equation in \eqref{f2}.

Consider the $\mathbb R$--linear operator
$$
T\, :\, {\mathfrak g}\, \longrightarrow\, {\mathfrak g}\, ,~ \ 
v\, \longmapsto\, \text{Ad}(h^{-1})(v)+ d\sigma_G(v)\, .
$$
Since $\text{kernel}(T)\bigcap \sqrt{-1}\cdot\text{kernel}(T)\,=\,0$, we have
\begin{equation}\label{f5}
\dim_{\mathbb R} \text{kernel}(T)\, \leq\, \dim_{\mathbb C} \mathfrak g\, ,
~\ \dim_{\mathbb R} \text{image}(T)\, \geq\, \dim_{\mathbb C} \mathfrak g\, .
\end{equation}
Now consider the $\mathbb R$--linear operator
$$
T'\, :\, {\mathfrak g}\, \longrightarrow\, {\mathfrak g}\, ,~ \ 
v\, \longmapsto\, -\text{Ad}(h)(v)+ d\sigma_G(v)\, .
$$
As before, $\text{kernel}(T')\bigcap \sqrt{-1}\cdot\text{kernel}(T')\,=\,0$, and
$$
\dim_{\mathbb R} \text{image}(T')\, \geq\, \dim_{\mathbb C} \mathfrak g\, .
$$
Combining this with \eqref{f5} and the above observation that
$\text{image}(T')\, \subset\, \text{kernel}(T)$, we conclude that
$\text{image}(T')\,=\, \text{kernel}(T)$. But
$\text{kernel}(T)/\text{image}(T')$ is the tangent space to $H^1_c(\bbz/2\bbz,\, G)$
at $h$. Therefore, the set $H^1_c(\bbz/2\bbz,\, G)$ is discrete.
\end{proof}

\begin{corollary}\label{cor1}
The set $H^1_c(\bbz/2\bbz,\, G)$ is finite.
\end{corollary}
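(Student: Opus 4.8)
The plan is to exhibit $H^1_c(\bbz/2\bbz,\, G)$ as the orbit space of an algebraic group action on a real algebraic variety, and then to play the openness of the orbits — which is the real content of Proposition \ref{prop2.1} — against the classical fact that a real algebraic variety has only finitely many connected components. Concretely, I would set
$$
V_c\,=\,\{h\in G\,\mid\, h\sigma_G(h)\,=\, c\}\, ,
$$
the variety of $c$-shifted cocycles, on which $G$ acts by $b\cdot h\,=\, b^{-1}h\sigma_G(b)$, so that $H^1_c(\bbz/2\bbz,\, G)\,=\, V_c/G$ as a set (if $V_c$ is empty there is nothing to prove, so assume it is not). Since, by definition, $\sigma_G$ is the conjugation attached to a real form $G_\bbr\subset G$, the complex group $G$ carries a compatible $\bbr$-structure, and then $h\,\longmapsto\, h\sigma_G(h)$ is a morphism of real algebraic varieties; hence $V_c$, being the fibre of this morphism over the point $c$, is itself a real algebraic variety, and so has only finitely many connected components (Whitney).

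Next, Proposition \ref{prop2.1} says that the orbit space $V_c/G$ is discrete, which means precisely that every $G$-orbit in $V_c$ is open — this is exactly what the tangent space computation in its proof yields, since there the tangent space to the orbit through $h$ is shown to coincide with the tangent space to $V_c$ at $h$. Consequently the complement of any orbit, being a union of orbits, is open too, so each orbit is also closed, hence is a union of connected components of $V_c$. As these components are finite in number and the orbits partition $V_c$, there are only finitely many orbits, i.e.\ $H^1_c(\bbz/2\bbz,\, G)$ is finite.

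The one point that needs care is the very first step: one must know that $V_c$ is genuinely \emph{algebraic}, not merely real-analytic, since a real-analytic subset of a complex manifold can have infinitely many connected components. This is what the identification of $\sigma_G$ with the Galois involution of an $\bbr$-form of $G$ buys us; with that in place the remainder of the argument is soft point-set topology. (Alternatively, one could avoid the issue by fixing a cocycle $h_0\in V_c$, rewriting $H^1_c(\bbz/2\bbz,\, G)$ as the Galois cohomology of the $\bbr$-form of $G$ obtained by twisting $\sigma_G$ by $\text{Ad}(h_0)$, and invoking the Borel--Serre finiteness theorem for $H^1$ of $\text{Gal}(\bbc/\bbr)$ with coefficients in a linear algebraic group; but the topological argument above is more direct.)
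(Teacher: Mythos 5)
Your argument is correct and is essentially the paper's own proof: both identify the set of $c$-shifted cocycles as a real algebraic variety (hence with finitely many connected components) and then use the tangent-space computation of Proposition \ref{prop2.1} to see that each coboundary equivalence class is open and closed in that variety, so the classes are finite in number. Your extra care about why the cocycle set $V_c$ is genuinely algebraic, and the parenthetical Borel--Serre alternative, are refinements rather than a different route.
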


\begin{proof}
Since the cocycle condition in \eqref{cohcon} is given by a real algebraic equation, it
follows that the subset of $G$ satisfying \eqref{cohcon} is a real algebraic variety.
In particular, it has finitely many connected components. In the proof of Proposition
\ref{prop2.1} we have seen that tangent space to $H^1_c(\bbz/2\bbz,\, G)$ is zero. From
this it follows that each equivalence class for the coboundary condition is a
connected component in the subset of $G$ satisfying \eqref{cohcon}. Therefore, from
Proposition \ref{prop2.1} it follows that $H^1_c(\bbz/2\bbz,\, G)$ is a finite set.
\end{proof}

We will now give an explicit description of the set $H^1_c(\bbz/2\bbz,\, G)$.

Let $\tau_G$ be a Cartan involution of $G$ defining a compact real form. The fixed point
set $G^{\tau_G}$, which is a compact group, will be denoted by $K$. Let $G\,=\, KM$ be
the Cartan decomposition, where
$$
M\,:=\, \{m\, \in\, G\, \mid\, \tau_G(m) \,= \, m^{-1}\}\, .
$$
We can suppose by a result of Cartan, that $\sigma_G\,=\, \theta \tau_G$, where
$\theta$ is a holomorphic involution of $G$. All of these involutions commute, and
so all the involutions map $K,M$ to themselves.

\begin{proposition}
Let $h$ represent a cohomology class in $H^1_c(\bbz/2\bbz,\, G)$. By the action 
$h\,\longmapsto \,\sigma_G(a)h a^{-1}$, one can normalize $h$ to an element $k$
lying in $K$.
 
One can normalize $k$ to an element of order $1\, ,2$ or $4$ ($1\, ,2$ if $c\,=\,1$)
lying in $K' \,= \, K^\theta$, which is defined up to conjugation in $K'$, and so
can be taken to lie in the set $T'_4$ of points of order $1,2$ or $4$ in a fixed
maximal torus $T'$ of $K'$. Two such elements $k'\, ,\widehat{k}'$ of $T'_4$ define the
same class in $H^1_c(\bbz/2\bbz, \, G)$ if there is an $a\,\in\, K$ with
$\theta (a) k' a^{-1} \,=\, \widehat{k}'$.
\end{proposition}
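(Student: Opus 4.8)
The plan is to perform the two normalizations in turn: the passage into $K$ uses the (noncompact) Cartan decomposition of $G$, while the passage into $K'$ uses the structure theory of the compact group $K$ together with the involution $\theta$.

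\emph{Step 1: normalizing $h$ into $K$.} Using the Cartan decomposition $G=K\exp(\mathfrak p)$, where $\mathfrak p$ is the $(-1)$-eigenspace of $d\tau_G$ on $\mathfrak g$, write $h=k\exp(\xi)$ with $k\in K$ and $\xi\in\mathfrak p$. Since $c$ has order dividing $2$ it lies in every maximal compact subgroup, in particular $c\in K$. Substituting $h=k\exp(\xi)$ into $h\,\sigma_G(h)=c$, using $\sigma_G=\theta\tau_G$ (so that $\sigma_G(k)=\theta(k)\in K$ and $\sigma_G(\exp\xi)=\exp(-d\theta(\xi))$ with $d\theta(\xi)\in\mathfrak p$), and appealing to the uniqueness of the Cartan decomposition, one finds that the $\exp(\mathfrak p)$-component of the left side must be trivial. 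This forces two things at once: $k\,\theta(k)=c$, so that $k$ is already a $c$-cocycle valued in $K$; and a compatibility identity $d\theta(\xi)=\mathrm{Ad}(k)(\xi)$. A short computation then shows that the coboundary parameter $a=\exp(-\xi/2)$ transforms $k$ back to $h$, so $[h]=[k]$ in $H^1_c(\bbz/2\bbz,\,G)$ with $k\in K$. (Conceptually this is a version of the Cartan fixed-point theorem: $\sigma_G$ descends to the nonpositively curved symmetric space $G/K$, left translation by $h$ is an isometry, and $gK\mapsto h\,\sigma_G(g)K$ is an isometric involution of $G/K$ precisely because $c\in K$ is central; any such involution has a fixed point, which supplies $a$.)

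\emph{Step 2: normalizing $k$ into $K'$.} Form the compact group $\widehat K:=K\rtimes\langle\theta\rangle$ and put $\gamma:=k\theta$, an element of the nontrivial coset $K\theta$. The cocycle relation gives $\gamma^2=k\,\theta(k)=c$, so $\gamma$ has finite order dividing $4$ (dividing $2$ when $c=e$), and $\theta_k:=\mathrm{Int}(k)\circ\theta=\mathrm{Int}(\gamma)|_K$ is an involutive automorphism of $K$. A direct check shows that the coboundary move $k\mapsto\theta(a)\,k\,a^{-1}$ with $a\in K$ corresponds exactly to replacing $\gamma$ by its conjugate $\theta(a)\,\gamma\,\theta(a)^{-1}$ in $\widehat K$; thus coboundary classes of $c$-cocycles in $K$ coincide with $K$-conjugacy classes of elements of $K\theta$ that square to $c$. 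One now invokes the conjugacy theory of disconnected compact Lie groups: the finite-order element $\gamma$ is $K$-conjugate to an element $\gamma'=t\theta$ with $\theta(S)=S$ for a maximal torus $S$ of $K$ normalized by $\gamma'$ and $t\in S$, so that $\mathrm{Int}(\gamma')|_S=\theta|_S$ ($\mathrm{Int}(t)$ being trivial on the abelian group $S$). On the torus $S$ everything is explicit: decomposing $S=S^{+}S^{-}$ according to the $(\pm1)$-eigenspaces of $d\theta|_{\mathrm{Lie}(S)}$, conjugation of $\gamma'=t\theta$ by $s\in S$ replaces $t$ by $t\cdot(s\,\theta(s)^{-1})$, and $\{\,s\,\theta(s)^{-1}\mid s\in S\,\}=S^{-}$; absorbing the $S^{-}$-factor of $t$ moves $t$ to its component $t^{+}\in S^{+}=(S^{\theta})^{\circ}\subseteq K^{\theta}=K'$. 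Since conjugation preserves squares, $(t^{+})^{2}=t^{+}\theta(t^{+})=c$. Finally $t^{+}$ lies in the identity component of $K'$, so one further coboundary move by an element of $K'$ (under which $\theta(a)=a$, so the move is ordinary $K'$-conjugation) carries $t^{+}$ into the fixed maximal torus $T'$ of $K'$. The resulting $k'\in T'$ satisfies $(k')^{2}=c$, hence has order $1$, $2$ or $4$, and order $1$ or $2$ when $c=e$; in particular $k'\in T'_{4}$.

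\emph{The equivalence criterion, and the main difficulty.} If $k',\widehat k'\in T'_{4}$ and $\widehat k'=\theta(a)k'a^{-1}$ for some $a\in K$, then since $\tau_G$ fixes $K$ pointwise we have $\theta|_K=\sigma_G|_K$, so this is the relation $\widehat k'=\sigma_G(a)k'a^{-1}$, a coboundary in $G$; hence $[k']=[\widehat k']$ in $H^1_c(\bbz/2\bbz,\,G)$. I expect the main obstacle to lie in Step 2: unlike in Step 1, the compact symmetric space $K/K'$ carries nonnegative curvature, so there is no direct fixed-point argument, and one must instead produce a maximal torus of $K$ adapted to $\gamma$ — equivalently, establish that the central element $c$ is a square inside the identity component of the relevant fixed subgroup, which holds exactly because a cocycle exists — before the explicit torus computation can be run, with some extra bookkeeping about the components of $K'$ when $K'$ is disconnected.
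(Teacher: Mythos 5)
Your argument is correct, and it follows the paper's overall architecture (Cartan decomposition to push $h$ into $K$, then a torus argument to reach $K^{\theta}$, then $K'$-conjugation into $T'$), but the middle step is justified by a genuinely different mechanism. Your Step 1 is the paper's own argument: the paper writes $h=km$ with $m\in M$ and uses uniqueness of the Cartan decomposition plus the coboundary $a=m^{1/2}$, which is exactly your $a=\exp(\xi/2)$ computation (your curvature/fixed-point aside is a nice conceptual gloss). In Step 2 the paper stays inside $K$: since $k$ and $\theta(k)$ commute, it takes a maximal $\theta$-invariant torus of the joint centralizer $K_{k,\theta(k)}$ containing $k$ and $\theta(k)$, splits $k=k'k''$ with $\theta(k')=k'$, $\theta(k'')=(k'')^{-1}$, and absorbs $k''$ by $(k'')^{1/2}$ --- the same $S=S^{+}S^{-}$ absorption you perform, but on a torus produced differently. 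You instead pass to $\widehat K=K\rtimes\langle\theta\rangle$, identify coboundary classes with $K$-conjugacy classes of $\gamma=k\theta$, and invoke the conjugacy theory of non-connected compact groups to place $\gamma$ in $S\theta$ for a $\theta$-stable maximal torus $S$. That invoked statement is true, but it is the crux and deserves a precise justification: one needs that $\theta$, being of finite order, stabilizes some (maximal torus, Weyl chamber) pair, that the finite-order element $\gamma$ is quasi-semisimple (Steinberg/de Siebenthal, cf. Digne--Michel or Mohrdieck--Wendt), and then the chamber-preservation argument forcing the $K$-part of $\gamma$ into $S$. What your route buys is robustness: the paper's assertion that a $\theta$-invariant maximal torus of $K_{k,\theta(k)}$ contains $k$ and $\theta(k)$ is delicate (commuting elements of a compact connected group need not lie in a common maximal torus, and $k$ need not sit in the identity component of the joint centralizer), and your disconnected-group theorem is precisely the standard tool that settles this point; the cost is reliance on that nontrivial external result rather than an elementary computation. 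Your closing verification that $\theta(a)k'a^{-1}=\sigma_G(a)k'a^{-1}$ for $a\in K$, giving the stated sufficiency criterion, matches the paper.
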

 
\begin{proof} 
We have $c\,=\,\sigma_G(h)h$ and so, writing $h \,= \,km, \, m\,\in\, M, \,
k\,\in\, K$, we have $c\,=\, \theta(k) k k^{-1}\theta(m^{-1})km$. Thus, $cm^{-1}\,=\, 
(\theta(k) k)( k^{-1}\theta(m^{-1})k)$; since the Cartan decomposition is unique and
$c$ lies in $K$, we have $c\,=\, \theta(k) k$, $1\,=\, 
k^{-1}\theta(m^{-1})km$, and so $1\,=\, k^{-1}\theta(m^{-1/2})km^{1/2}\,=\, 
k^{-1}\sigma_G(m^{1/2})km^{1/2}$. Setting $a\,=\, m^{1/2}$, we get $\sigma_G(a) h a^{-1}\,
=\,k k^{-1}\sigma_G(m^{1/2})km m^{-1/2}\,=\, k$, so that one can normalize $h$ to lie in $K$.
 
One has, since $c\,=\,\theta(k) k$, that $\theta(k)\, , k$ commute. Let $K_{k,\theta(k)}$ be 
the group of elements of $K$ which commute with $k\, , \theta(k)$; this is $\theta$ 
invariant, and let us take a maximal $\theta$ invariant torus $T$ inside this group, 
which will contain $k, \theta(k)$. Within $T$, let us choose a decomposition $k\,=\, 
k'k''$, with $\theta(k')\,=\, k'\, , \theta(k'')\,=\, (k'')^{-1}$; acting by a $(k'')^{1/2}$ 
within $T$ reduces $k$ to a $k'$ with $\theta(k')\,=\,k'$. The equation then becomes 
$(k')^2\,=\, c$, and so $(k')^4 \,=\, 1$. The set $K'_4$ of such $k'$ is then to be 
considered modulo conjugation in $K' \,=\, Fix(\theta)\,=\, K^\theta \,
\subset\, K$, and so can be taken to lie 
in the set $T'_4$ of elements of order at most $4$ in a fixed maximal torus $T'$ of $K'$. 
The coboundary condition defines equivalence classes by saying that two elements 
$k'\, ,\widehat{k}'$ of $T'_4$ are equivalent if there is an $a\,\in\, K$ with
$\theta(a) k' a^{-1}\,=\, \widehat{k}'$, in particular if they are in the same
orbit under the Weyl group $\text{Weyl}(K')$.
\end{proof}

The center of $G$ will be denoted by $Z$. Consider the exact sequence of groups
$$
1\,\longrightarrow\, Z\,\longrightarrow\, G\,\longrightarrow\,G_{ad}\,\longrightarrow\, 1\, ,
$$
where $G_{ad}$ is the adjoint group. As in \eqref{2c}, define 
 $$H^2(\bbz/2\bbz,\, Z)\,=\, Z_\bbr /\{\sigma_G(a)\cdot a\, \mid\, a\,\in\, Z\}\, .$$
As noted following \eqref{2c}, we can suppose that all our elements $c$ of
$H^2(\bbz/2\bbz,\, Z)$ are represented by elements of order two.

\begin{proposition}\label{sequence}
There is an exact sequence of pointed sets
$$
H^1(\bbz/2\bbz,\, Z)\,\longrightarrow\, H^1(\bbz/2\bbz,\, G)\,
\longrightarrow\, H^1(\bbz/2\bbz,\, G_{ad})\,\longrightarrow\, H^2(\bbz/2\bbz,\, Z)\, .
$$
Each element $h\, \in\, H^1(\bbz/2\bbz,\, G_{ad})$ lift to $H_c^1(\bbz/2\bbz,\, G)$ for
some $c$ in $H^2(\bbz/2\bbz,\, Z)$. Under the last homomorphism of the sequence, the
image of any $h\, \in\, H^1(\bbz/2\bbz,\, G_{ad})$ is this $c\,\in\,
H^2(\bbz/2\bbz,\, Z)$.
\end{proposition}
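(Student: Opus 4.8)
The plan is to run the long exact sequence in non-Abelian cohomology attached to the central extension $1\to Z\to G\to G_{ad}\to 1$ (with the $\bbz/2\bbz$-action given by $\sigma_G$ and the induced real form on $G_{ad}$), taking care with the coboundary conventions fixed in the preceding lemmas and with the bookkeeping of central factors. With those conventions a $1$-cocycle for $G$ is an element $h\in G$ with $h\sigma_G(h)=e$, two such being identified when $h'=b^{-1}h\sigma_G(b)$; likewise for $G_{ad}$, and the maps $H^1(\bbz/2\bbz,Z)\to H^1(\bbz/2\bbz,G)$ and $H^1(\bbz/2\bbz,G)\to H^1(\bbz/2\bbz,G_{ad})$ are those induced by $Z\hookrightarrow G$ and $G\to G_{ad}$; since both send the class of $e$ to the class of $e$, they are morphisms of pointed sets.

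Next I would construct the connecting map $\delta\colon H^1(\bbz/2\bbz,G_{ad})\to H^2(\bbz/2\bbz,Z)$. Given a cocycle $\overline h\in G_{ad}$, pick any lift $h\in G$. From $\overline h\,\sigma_{G_{ad}}(\overline h)=e$ we get $c:=h\sigma_G(h)\in Z$; applying $\sigma_G$ and using that $c$ is central (so $\sigma_G(h)=h^{-1}c$) gives $\sigma_G(c)=\sigma_G(h)h=h^{-1}ch=c$, hence $c\in Z_\bbr$. Replacing $h$ by $ha$ with $a\in Z$ multiplies $c$ by $a\sigma_G(a)$; replacing $\overline h$ by a coboundary and lifting the coboundary parameter to $b\in G$ changes $c$ to $b^{-1}cb=c$. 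Thus $\delta(\overline h):=[c]$ is a well-defined element of $H^2(\bbz/2\bbz,Z)$, and it sends the base point to the base point. This same computation proves the last two assertions: the chosen lift $h$ is by construction a $c$-shifted cocycle, so it defines a class in $H^1_c(\bbz/2\bbz,G)$ mapping onto $\overline h$, and $\delta(\overline h)$ is precisely this $c$. (Replacing the representative of $[c]\in H^2(\bbz/2\bbz,Z)$ only changes $H^1_c$ by the central twist discussed around \eqref{2c}, so nothing depends on that choice.)

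Finally I would check exactness in the pointed-set sense, i.e.\ that at each interior term the preimage of the base point equals the image of the preceding map. At $H^1(\bbz/2\bbz,G)$: a cocycle $h$ maps to the base point of $H^1(\bbz/2\bbz,G_{ad})$ iff $\overline h=\overline b^{-1}\sigma_{G_{ad}}(\overline b)$ for some $\overline b$; lifting $\overline b$ to $b$, the elements $h$ and $b^{-1}\sigma_G(b)$ have the same image in $G_{ad}$, so $h=b^{-1}\sigma_G(b)\,z$ with $z\in Z$, and applying the coboundary with parameter $b^{-1}$ shows $h$ is cohomologous to $z$; hence $h$ lies in the image of $H^1(\bbz/2\bbz,Z)$, the reverse inclusion being clear since $Z$ maps to $e$ in $G_{ad}$. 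At $H^1(\bbz/2\bbz,G_{ad})$: $\delta(\overline h)=0$ means $c=a\sigma_G(a)$ for some $a\in Z$, and then $ha^{-1}$ satisfies $(ha^{-1})\sigma_G(ha^{-1})=c\,(a\sigma_G(a))^{-1}=e$, so it is an honest $G$-cocycle lifting $\overline h$; conversely an honest $G$-cocycle lift forces $c=e$. The only thing needing real care throughout is the direction of the coboundary action and the placement of the central correction terms; once the conventions are pinned down the argument is formal, so I do not anticipate a serious obstacle.
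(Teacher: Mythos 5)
Your proof is correct and takes essentially the same approach as the paper: lift a cocycle for $G_{ad}$ to $G$, measure the central defect $c=h\sigma_G(h)\in Z_\bbr$ to define the map to $H^2(\bbz/2\bbz,\,Z)$, and observe that the chosen lift is precisely a $c$-shifted cocycle, so it gives a class in $H^1_c(\bbz/2\bbz,\,G)$ over the given element. You merely spell out the well-definedness and the exactness checks at the two interior terms, which the paper's proof leaves implicit.
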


\begin{proof} 
Consider an $h\,\in\, G_{ad}$ defining a class in $H^1(\bbz/2\bbz,\, G_{ad})$. It satisfies the
condition $\sigma_G(h)h \,=\, e$. Its lift $\widetilde h$ in $G$ then satisfies the constraint 
$\sigma_G({\widetilde h}){\widetilde h} \,=\, c$, with $c$ a real element of the center; we 
note that the different choices involved tell us that in fact we have a class in 
$H^2(\bbz/2\bbz,\, Z)$, and we take this to be the coboundary. If this
class in $H^2(\bbz/2\bbz,\, Z)$ is trivial, then $h$ 
lifts to an element $\widetilde h$ of $H^1(\bbz/2\bbz,\, G) $; more generally, the lift is to 
an element of $H_c^1(\bbz/2\bbz,\, G)$. The ambiguity of the lift is by an element $a$ of 
the center, satisfying $\sigma_G(a)a \,= \,1$, and so the fiber in $H_c^1(\bbz/2\bbz,\, G)$ 
lying over $h$ is an ``orbit'' of $H^1(\bbz/2\bbz,\, Z)$; this orbit might not be free, 
however.
\end{proof}

\subsection{Real bundles and real forms of the group}

The classification of real forms for bundles mimics very closely the classification 
of real forms for the group. For the group, real forms $\sigma_G,\, \tau_G$ differ 
by an automorphism $\theta$ of the group: $\tau_G \,=\, \theta\sigma_G$. Indeed, let 
$\sigma^0_G$ denote an anti-holomorphic involution of $G$ giving the compact real 
form. The real forms of $G$ are obtained from $\sigma^0_G$ by composing with an 
involution of order two, which could be an inner, or outer automorphism. More 
generally, the different anti-holomorphic involutions are grouped into equivalence 
classes whereby two automorphisms are equivalent if they are related by an inner 
automorphism. Thus, for example, the real involutions for ${\rm SL}(n, {\mathbb 
C})$ which give the various ${\rm SU}(p,n-p)$, $0\,\leq\, p\, \leq\, n$, are all in 
the same equivalence class.

Now fix a $\sigma_G$ in such an inner equivalence class.

Two real forms $\tau_G$ and $\tau'_G$ of $G$ are 
called \textit{equivalent} if there is an element $g\, \in\, G$ such that 
$\tau'_G\,=\, \text{Ad}_g\circ\tau_G\circ (\text{Ad}_g)^{-1}$.

We have the following:

\begin{proposition}
The set of equivalence classes of real forms obtained from $\sigma_G$ by an inner 
automorphism is $H^1(\bbz/2\bbz,\, G_{ad})$: for $k\,\in\, G$ representing a class in 
$H^1(\bbz/2\bbz,\, G_{ad})$, we can define another real form by $\sigma_G^k \,=\, 
{\rm Ad}_k\circ \sigma_G$.
\end{proposition}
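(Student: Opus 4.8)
The plan is to exhibit a bijection between the set of inner-equivalence-class representatives and $H^1(\bbz/2\bbz,\, G_{ad})$, and to check it is well-defined on both sides. First I would observe that any real form obtained from $\sigma_G$ by an inner automorphism can be written as $\mathrm{Ad}_k\circ\sigma_G$ for some $k\in G$: by hypothesis it is $\mathrm{Ad}_g\circ\sigma_G\circ\mathrm{Ad}_g^{-1}$, and since $\sigma_G\circ\mathrm{Ad}_g^{-1}\circ\sigma_G^{-1}=\mathrm{Ad}_{\sigma_G(g)}$ (antiholomorphicity does not affect that $\sigma_G$ intertwines conjugations up to applying $\sigma_G$ to the conjugating element), we get $\mathrm{Ad}_g\circ\sigma_G\circ\mathrm{Ad}_g^{-1}=\mathrm{Ad}_{g\sigma_G(g)^{-1}}\circ\sigma_G$, so $k=g\sigma_G(g)^{-1}$ works. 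The first step, then, is to impose that $\sigma_G^k:=\mathrm{Ad}_k\circ\sigma_G$ be an \emph{involution}: computing $(\sigma_G^k)^2=\mathrm{Ad}_k\circ\sigma_G\circ\mathrm{Ad}_k\circ\sigma_G=\mathrm{Ad}_k\circ\mathrm{Ad}_{\sigma_G(k)}=\mathrm{Ad}_{k\sigma_G(k)}$, so $\sigma_G^k$ is an involution precisely when $k\sigma_G(k)\in Z$. Passing to $G_{ad}$, where $Z$ dies, the image $\bar k$ satisfies $\bar k\sigma_G(\bar k)=e$, i.e.\ $\bar k$ is exactly a cocycle for $H^1(\bbz/2\bbz,\, G_{ad})$. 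So the condition ``$\sigma_G^k$ is a real form (in the given inner class)'' translates to ``$\bar k$ is a $1$-cocycle in $G_{ad}$.''

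Next I would identify which $k$'s give \emph{equivalent} real forms. Two forms $\sigma_G^k$ and $\sigma_G^{k'}$ are equivalent iff there is $g\in G$ with $\mathrm{Ad}_g\circ\sigma_G^k\circ\mathrm{Ad}_g^{-1}=\sigma_G^{k'}$; expanding the left side as above gives $\mathrm{Ad}_{g\,k\,\sigma_G(g)^{-1}}\circ\sigma_G$, so equivalence amounts to $g\,k\,\sigma_G(g^{-1})\equiv k'$ modulo $Z$, i.e.\ $\bar g\,\bar k\,\sigma_G(\bar g)^{-1}=\bar k'$ in $G_{ad}$. But that is exactly the coboundary relation defining $H^1(\bbz/2\bbz,\, G_{ad})$ (with the conjugation action of $G_{ad}$ on its cocycles twisted by $\sigma_G$, matching the coboundary convention $h\simeq b h\sigma_G(b)^{-1}$ used earlier in the paper). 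Hence the assignment $k\mapsto[\sigma_G^k]$ descends to a \emph{well-defined injection} from $H^1(\bbz/2\bbz,\, G_{ad})$ into the set of equivalence classes of real forms in the inner class of $\sigma_G$.

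For surjectivity, I invoke the first step: every real form in the inner class of $\sigma_G$ has the shape $\mathrm{Ad}_k\circ\sigma_G$ with $k\sigma_G(k)\in Z$, so its class is hit by the cocycle $\bar k\in H^1(\bbz/2\bbz,\, G_{ad})$. Finally I would note that the base point $e\in H^1(\bbz/2\bbz,\, G_{ad})$ maps to $[\sigma_G]$ itself, so the bijection is one of pointed sets, which is the natural statement here. The one genuinely delicate point — and the step I expect to be the main obstacle — is keeping the $\sigma_G$-twist in the $G_{ad}$-action bookkeeping consistent: one must verify that $\sigma_G$ descends to $G_{ad}$ (clear, since $\sigma_G(Z)=Z$), that $\mathrm{Ad}_g$ for $g\in G$ realizes \emph{every} inner automorphism of $G_{ad}$ (it does, as $G\to G_{ad}$ is surjective), and that the cocycle/coboundary formulas come out in the same convention as in the paper's definition of $H^1(\bbz/2\bbz,\,-)$ rather than its inverse-transpose; a careless sign or inversion here would produce the opposite class. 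Once that is pinned down, the proof is the sequence of identities above with no further input.
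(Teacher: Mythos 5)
Your proposal is correct and takes essentially the same route as the paper's proof: imposing that $\mathrm{Ad}_k\circ\sigma_G$ be an involution forces $k\sigma_G(k)$ to be central, hence a cocycle in $G_{ad}$, while conjugating by $\mathrm{Ad}_g$ replaces $k$ by $g\,k\,\sigma_G(g)^{-1}$, which is precisely the coboundary relation, and your added checks of well-definedness, surjectivity, base point and cocycle conventions are consistent with the paper's $H^1(\bbz/2\bbz,\,G_{ad})$. Two cosmetic points that do not affect the argument: the intermediate identity should be $\sigma_G\circ\mathrm{Ad}_g\circ\sigma_G^{-1}=\mathrm{Ad}_{\sigma_G(g)}$ (your version with $\mathrm{Ad}_g^{-1}$ needs an inverse on the right-hand side), and ``obtained from $\sigma_G$ by an inner automorphism'' means composing with $\mathrm{Ad}_k$ (the set your second paragraph actually classifies, as in the paper), not conjugating by it, so surjectivity is immediate and your opening observation about conjugates is only the special case showing the equivalence relation preserves this family.
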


\begin{proof} Consider all $h\, \in\, G$ such that
$\text{Ad}_h\circ \sigma_G$ is an involution. This
gives the condition $h\sigma_G(h)$ central, and so $h\sigma_G(h) \,=\, e$ in $G_{ad}$ (inner automorphisms are given by the
adjoint group). One would want to consider
as equivalent real forms 
$$\text{Ad}_h\circ \sigma_G\, ,\, \ \
\text{Ad}_a\circ \text{Ad}_h\circ \sigma_G\circ (\text{Ad}_a)^{-1}$$
 giving an equivalence $h\,\simeq\, ah\sigma_G(a^{-1})$, and so our cohomology group.\end{proof}
 
 Now suppose that the $k\in G$ is such that $k\sigma_G(k) \,=\, c$. This gives us a 
class in $H^2(\bbz/2\bbz,\, Z)$.
 
 \begin{proposition}
The set of pseudo-real forms of bundles for the real structure $\sigma_G$, and 
class $c'\in H^2(\bbz/2\bbz,\, Z)$ and those for the real structure $\sigma_G^k$, 
and class $c'c^{-1}\,\in\, H^2(\bbz/2\bbz,\, Z)$ are in bijection: if $\sigma_E$ is 
a real bundle structure for the real structure $\sigma_G$, then setting
$$
\sigma^k_E(e\cdot g) \,\stackrel{def}{=}\,\sigma_E(e\cdot g)\cdot k^{-1}\, ,
$$
we have that $\sigma^k_E$ is a real bundle structure for the real structure
$\sigma^k_G$, giving a bijection
$$H^1_{\sigma_G}(\bbz/2\bbz,\, G)\,\longrightarrow\, H^1_{{\sigma}^k_G}(\bbz/
2\bbz, \,G)\, ,$$
where $H^1_{\sigma_G}(\bbz/2\bbz,\, G)$ and $H^1_{{\sigma}^k_G}(\bbz/
2\bbz, \,G)$ are $H^1(\bbz/2\bbz,\, G)$ for the involutions $\sigma_G$ and
$\sigma^k_G$ respectively.
\end{proposition}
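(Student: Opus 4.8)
The plan is to show that the assignment $\sigma_E \mapsto \sigma^k_E$ defined by $\sigma^k_E(e\cdot g) \,=\, \sigma_E(e\cdot g)\cdot k^{-1}$ is well-defined (i.e.\ $\sigma^k_E$ really is a $c'c^{-1}$-pseudo-real structure for the involution $\sigma^k_G \,=\, \mathrm{Ad}_k\circ\sigma_G$), that it descends to isomorphism classes, and that it has an inverse. First I would check that the formula $\sigma^k_E(e\cdot g) \,=\, \sigma_E(e\cdot g)\cdot k^{-1}$ is a consistent definition, i.e.\ that the right-hand side depends only on the point $e\cdot g$ and not on the chosen representative: since $\sigma_E$ is already a map, and multiplication by $k^{-1}$ on the right is well-defined on the total space of $E$, this is immediate. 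Then I would verify that $\sigma^k_E$ is antiholomorphic (composition of the antiholomorphic $\sigma_E$ with right translation by the fixed element $k^{-1}$) and covers $\sigma_X$ (right translation by $k^{-1}$ is fibre-preserving, so the diagram over $X$ is unaffected).

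Next I would check the two defining properties of a pseudo-real structure with respect to $\sigma^k_G$. For equivariance, I compute $\sigma^k_E(p\cdot g) \,=\, \sigma_E(p\cdot g)\cdot k^{-1} \,=\, \sigma_E(p)\cdot\sigma_G(g)\cdot k^{-1} \,=\, \sigma_E(p)\cdot k^{-1}\cdot(k\sigma_G(g)k^{-1}) \,=\, \sigma^k_E(p)\cdot(\mathrm{Ad}_k\circ\sigma_G)(g) \,=\, \sigma^k_E(p)\cdot\sigma^k_G(g)$, which is exactly the required compatibility. For the square, I compute $(\sigma^k_E)^2(p) \,=\, \sigma^k_E(\sigma_E(p)\cdot k^{-1}) \,=\, \sigma^k_E(\sigma_E(p))\cdot\sigma^k_G(k^{-1}) \,=\, \sigma_E(\sigma_E(p))\cdot k^{-1}\cdot(k\sigma_G(k^{-1})k^{-1}) \,=\, \sigma_E^2(p)\cdot\sigma_G(k^{-1})\cdot k^{-1} \,=\, p\cdot c'\cdot(k\sigma_G(k))^{-1} \,=\, p\cdot c'c^{-1}$, using that $\sigma_E^2(p) \,=\, p\cdot c'$, that $c \,=\, k\sigma_G(k)$ is central, and that $\sigma_G(k^{-1})k^{-1} \,=\, (k\sigma_G(k))^{-1} \,=\, c^{-1}$ since $k$ and $\sigma_G(k)$ need not commute a priori but their product is central so the order does not matter after taking inverses; I would be slightly careful here to order the factors so that the centrality of $c$ is what rescues the computation. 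This establishes that $\sigma^k_E$ is a $c'c^{-1}$-pseudo-real structure for $\sigma^k_G$.

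Finally I would observe that the construction is manifestly compatible with isomorphisms of pseudo-real bundles (an isomorphism $\phi\colon E\to E'$ intertwining $\sigma_E$ and $\sigma_{E'}$ also intertwines $\sigma^k_E$ and $\sigma^k_{E'}$, since $\phi$ is $G$-equivariant and right translation by $k^{-1}$ commutes with $\phi$), so the assignment passes to a map on the level of cohomology $H^1_{\sigma_G}(\bbz/2\bbz,\,G)\,\longrightarrow\, H^1_{\sigma^k_G}(\bbz/2\bbz,\,G)$. The inverse is given by the same formula with $k$ replaced by $k^{-1}$ and $\sigma^k_G$ in the role of $\sigma_G$ — one checks $(\sigma^k_G)^{k^{-1}} \,=\, \mathrm{Ad}_{k^{-1}}\circ\mathrm{Ad}_k\circ\sigma_G \,=\, \sigma_G$ and that the two operations compose to the identity on structures — so the map is a bijection. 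The only mild subtlety, and the place I would spend the most care, is bookkeeping the noncommutativity of $k$ and $\sigma_G(k)$ in the computation of $(\sigma^k_E)^2$ and confirming that the class $c' c^{-1}$ in $H^2(\bbz/2\bbz,\,Z)$ is the correct one; everything else is a direct unwinding of the definitions.
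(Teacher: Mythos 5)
Your proof is correct and follows essentially the same route as the paper: a direct check that $\sigma^k_E$ is $\sigma^k_G$-equivariant, followed by the computation $(\sigma^k_E)^2(p)\,=\,\sigma_E^2(p)\cdot\sigma_G(k^{-1})k^{-1}\,=\,p\cdot c'c^{-1}$. The additional bookkeeping you supply (well-definedness on isomorphism classes and the explicit inverse via $k^{-1}$) is exactly what the paper leaves implicit in asserting the bijection, and your one cautionary remark is harmless since $\sigma_G(k^{-1})k^{-1}=(k\sigma_G(k))^{-1}$ needs no centrality at all.
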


\begin{proof}The second statement is first a matter of checking that the relation $\sigma^k_E(e\cdot g) = \sigma^k_E(e) \cdot \sigma^k(g)$ is satisfied; then one has that 
$$(\sigma^k_E)^2(e)\,=\, \sigma^k_E(\sigma_E(e)\cdot k^{-1})
\,= \,\sigma_E(\sigma_E(e)\cdot k^{-1})\cdot k^{-1}\,=\,
\sigma_E(\sigma_E(e))\cdot \sigma_G(k^{-1})\cdot k^{-1}\,=\, c'c^{-1}\, .$$
This completes the proof.
\end{proof}

Thus, for inner equivalent real structures on the group, we have the cohomology $H^1(\bbz/2\bbz,\, G_{ad})$; on the other hand, the set of real structures for a principal $G$--bundle over a point is
$H^1(\bbz/2\bbz,\, G)$, with the pseudo-real structures for $c$ being given by $H_c^1(\bbz/2\bbz,\, G)$. The two concepts are very close, with the obvious remark that for adjoint
groups they coincide. For more general reductive groups, to effect the classification of real and pseudo-real bundles for the various real 
forms of the group, it will suffice to classify for one real form for the group in each inner equivalence class. Moreover, the preimage in $H^1(\bbz/2\bbz,\, G)$ (see \ref {sequence}) of an element in $H^1(\bbz/2\bbz,\, G_{ad})$ is a copy of $H^1(\bbz/2\bbz,\, Z)$; this is not automatic in an exact sequence of pointed sets, but follows here from the homogeneity described above, and the fact that $H^1(\bbz/2\bbz,\, Z)$ is the same for all inner equivalent real forms of the group.

The cohomologies $H^1(\bbz/2\bbz,\, G)$ are computed in \cite[Section~9]{Ad} for a 
variety of groups, both classical and exceptional; we give here their explicit 
representatives, for a variety of examples, as these are useful in understanding 
reality conditions over a circle, which is the next step.

\subsection{Examples}

{\it (a)\, $G\,=\,\bbc^*$, with the automorphism $\sigma_G (a)\,=\, \overline a^{-1}$}. One 
computes $H^2(\bbz/2\bbz,\, \bbc^*)\,=\,\{1\}$. The set of cocycles is then $\bbr^*$, and 
the coboundaries $\bbr^+$. The cohomology classes in $H^1(\bbz/2\bbz, \,\bbc^*)\,=\, 
\bbz/2\bbz$ are represented by $\pm 1$.

{\it (b)\, $G\,=\,\bbc^*$, with the automorphism $\sigma_G (a)\,=\, \overline a$}. One has
$H^2(\bbz/2\bbz,\, \bbc^*)\,= \,\{\pm 1\}$. The set of cocycles and of coboundaries are
both the unit circle; there is a single real structure. On the other hand, for $c\,=\,
-1$, there are no cocycles, as one has to solve $\overline{h} h\,=\, -1$. One has $H^1(\bbz/2\bbz,
\, \bbc^*)\,=\, \langle 1\rangle$.

{\it (c)\, ${\rm GL}(n,{\mathbb C})$, $n>2$, with $\sigma_G(g)\,=\, (g^*)^{-1}$, so that
the fixed subgroup is ${\rm U}(n)$}. In this case, one has $(h^*)^{-1}h \,=\, \bbi$.
Changing trivializations modifies $h$ to $a^*ha$. Taking the polar decomposition $h\,=\,
u\cdot p$ ($u$ unitary and $p$ hermitian positive) of $h$, gives $up^{-1}up\,=\,\bbi$, or
$up^{-1}u\,=\,p^{-1}$. The unitary matrices act on $h$ by conjugation, and so also on
$u$, $p$. We can by a unitary change of trivialization diagonalize $u$ to
$diag(\exp(\sqrt{-1}\theta_j))$; one then has for the diagonal entries of $p^{-1}$, that $p^{-1}_{ii}
\,=\,\exp(2\sqrt{-1}\theta_j) p^{-1}_{ii}$, and so, by positivity, that the eigenvalues of $u$
are $\pm 1$. This then tells us that $u^{-1}\,=\,u$, and so, up to conjugation
$diag(1,\cdots ,1,-1,\cdots ,-1)$. The relation $u^{-1}p^{-1}u\,=\,p^{-1}$ tells us then that $p$ is
also block diagonal, and indeed one can further normalize so that it is diagonal. Finally,
acting by $a$ a positive diagonal real matrix, one can normalize $p$ to $\bbi$, and so $h$
to $diag(1,\cdots,1,-1,\cdots ,-1)$. This then leaves one invariant, the signature, and so 
$H^1(\bbz/2\bbz,\, {\rm GL}(n,{\mathbb C}))\,=\, \{0,1,\cdots ,n\}$ is of cardinality $n+1$.

{\it (d)\, ${\rm PGL}(n,{\mathbb C})$, $n>2$, with $\sigma_G(g)\,=\,(g^*)^{-1}$, so that
the
fixed subgroup is ${\rm PU}(n)$}. One proceeds as above, except that now $diag(1,\cdots ,1,
-1,\cdots,-1)$ is equivalent to $$diag(-1,\cdots ,-1,1,\cdots ,1)\, ;$$ then $H^1(\bbz/2\bbz,
\, {\rm PGL}(n,{\mathbb C}))\,=\, \{0,1,\cdots ,n\}/\bbz/2\bbz$, where the action is by
$k\,\longmapsto\, n-k$. Combining with the results of examples (a) and (c), the
sequence in Proposition \ref{sequence} becomes 
$$ \{\pm1\}\,\longrightarrow\, \{0,1,\cdots ,n\}\,\longrightarrow\,
\{0,1,\cdots ,n\}/\bbz/2\bbz\,\longrightarrow\, \{\pm1\}\, .$$

{\it (e)\, ${\rm GL}(n,{\mathbb C})$, $n>2$, with $\sigma_G(g)\,=\,\overline g$}: in this
case, the cocycle $\widetilde\sigma(e)\,=\,h$, gives a real endomorphism of $\bbc^n$ by
$T(a)\,=\,h(\overline a)$. This is anti-linear, so that if $I$ is multiplication by
$\sqrt{-1}$, one has $TI\,=\,-IT$. Also, $T$ has square the identity, and so has
$\pm 1$ eigenspaces, both of real dimension $n$ (as they are interchanged by multiplication
by $\sqrt{-1}$), and both spanning $\bbc^n$ as complex vector spaces. The coboundary
equivalence $h\,\longmapsto\,\overline g hg^{-1}$ gives $T\,\longmapsto\,{\overline g} T
{\overline g}^{-1}$, and so one can in essence change bases so that the $+1$ eigenspace of
$T$ corresponds to the standard basis, normalizing $h$ to the identity. This gives
$H^1(\bbz/2\bbz,\, {\rm GL}(n,{\mathbb C})) \,=\, \{ 1\}$.

In the same way, for $H^1_{-\bbi}(\bbz/2\bbz,\, {\rm GL}(n,{\mathbb C}))$, one builds an
anti-linear $T$ with $T^2\,=\, -\bbi$; this is impossible in odd dimension, and in even
dimension, one can normalize $h$ to 
$$J \,= \,\begin{pmatrix} 0&-\bbi\\ \bbi&0\end{pmatrix}$$

{\it (f)\, ${\rm PGL}(n,{\mathbb C})$, $n>2$, with $\sigma_G(g)\,=\, {\overline g}$}. This
basically repeats the two calculations for ${\rm GL}(n,{\mathbb C})$, and so 
$$H^1(\bbz/2\bbz,\, {\rm PGL}(n,{\mathbb C}))\,=\, \{ 1\}$$ for $n$ odd, and
$H^1(\bbz/2\bbz,\, {\rm PGL}(n,{\mathbb C}))\,=\,\{ \pm 1\}$ for $n$ even. Combining
with the results of examples (b) and (d), the sequence
in Proposition \ref{sequence} becomes 
$$0\,\longrightarrow\, 0\,\longrightarrow\, 0\,\longrightarrow\, \{ \pm 1\}$$
for $n$ odd, and 
$$0\,\longrightarrow\, 0\,\longrightarrow\,\{ \pm 1\}\,\longrightarrow\,\{ \pm 1\}$$
for $n$ even.

{\it (g)\, ${\rm SO}(2n,\bbc), n>1$, with $\sigma_G(g)\,=\, \overline g$}. The elements $g$
of this group satisfy $g^T \,=\,g^{-1}$. Here the real group is the compact group
${\rm SO}(2n)$. The center is $\{\pm 1\}$. The cocycle condition $\overline h h \,=\, 1$
tells us that $h$ is hermitian, as well as being orthogonal. One can diagonalize such an
$h$ with a unitary matrix. The result must have eigenvalues that are real, as well as being
of norm one, so the result is then a matrix $D_{2k}$ with $2k$ eigenvalues that are $1$ and
$2n-2k$ that are $-1$. This tells us that $h \,=\, uDu^{-1}$. Any two such $u$ differ by
an element of the stabilizer of $D$, and so we can normalize $u$ to a unique form 
$$\begin{pmatrix} \bbi& a\\ b& \bbi\end{pmatrix}\, .$$
Then also $h\,=\, \overline h^{-1}$ tells us that $h\,=\,\overline uD\overline u^{-1}$, so
that $u$ and $\overline u$ coincide. The element $u$ is then an orthogonal matrix. The cohomology
classes are then the possible matrices $D$. The same argument works for ${\rm SO}(2n+1,\bbc)$.

For the $c\,=\, -1$, one has $h$ normalizable to $J$, as for ${\rm GL}(n,{\mathbb C})$. 

{\it (h)\, ${\rm SO}(2n,\bbc), n>1$, with $\sigma_G(g)\,=\, {\widetilde D}_1{\overline g}
{\widetilde D}_1^{-1}$}. (We note that the inner and outer automorphisms here tend to
get confused, as the outer automorphisms are inner for the slightly larger group
${\rm O}(2n, {\mathbb C})$.) This case reproduces the previous one, in essence.

{\bf Table of real, pseudo-real structures over a point.}
\bigskip

{\tiny{\begin{tabular}{|l|c|c|c|c|}\hline
&&&&\\$G,\sigma_G(g)$&$H^1(\bbz/2\bbz, Z)$ &$H^1(\bbz/2\bbz, G), [H^1_c(\bbz/2\bbz, G)]$& $H^1(\bbz/2\bbz, G_{ad})$& $H^2(\bbz/2\bbz, Z)$  \\ \hline\hline &&&& \\ $\bbc^*,\ \overline g^{-1}$& $\{ \pm 1\}$&$\{ \pm 1\},\ $&$\{ 1\}$&$\{1\}$
\\ \hline &&&& \\ $\bbc^*,\ \overline g $& $\{ 1\}$&$\{1\} $&$\{1\} $&$\{\pm 1\} $
\\ \hline &&&& \\ ${\rm GL}(2),\  (g^*)^{-1} $& $\{\pm 1\} $&$\{\pm 1, iJ\simeq diag(1,-1)\}  $ &$\{ 1, J\} $&$\{1\} $\\ \hline &&&& \\ ${\rm GL}(n),n>2,\  (g^*)^{-1} $& $\{\pm 1\} $&$\{diag(1,\cdots,1,-1,\cdots,-1)\}  $ &$\{diag(1,\cdots,1,-1,\cdots,-1)\}/{\pm 1} $ &$\{1\} $
\\ \hline &&&& \\ ${\rm SL}(2n), n>1,\  (g^*)^{-1} $& $\{\pm 1\}$&$\{diag(1,\cdots,1,-1,\cdots,-1)\}  $ &$\{diag(1,\cdots,1,-1,\cdots,-1)\}/{\pm 1} $ &$\{\pm 1\} $\\&&(even number of $-1$)&&\\&&$[\{diag(i,\cdots,i,-i,\cdots,-i)\}_{c=-1}$&&\\&&(odd number of -i)]&&
\\ \hline &&&& \\ ${\rm SL}(2n+1), \  (g^*)^{-1} $& $\{  1\}$&$\{\pm diag(1,\cdots,1,-1,\cdots,-1)\}  $ &$\{diag(1,\cdots,1,-1,\cdots,-1)\}/{\pm 1} $ &$\{  1\} $\\&&(choose sign so that ~
$\det = 1$)&&
\\ \hline &&&& \\ ${\rm GL}(2n),n>1,\  \overline g $& $\{ 1\} $&$\{1\}, [\{J\}_{c=-1}] $ &$\{1,J\}$ &$\{\pm 1\} $
\\ \hline &&&& \\ ${\rm GL}(2n+1),\  \overline g $& $\{ 1\} $&$\{1\} $ &$ \{ 1\} $ &$\{\pm 1\} $
\\ \hline&&&& \\ ${\rm SO}(2n,\bbc) , n>1,\  \overline g$& $\{\pm 1\}$&$\{diag(1,\cdots,1,-1,\cdots,-1)\},  [\{J\}]  $ &$\{J,diag(1,\cdots,1,-1,\cdots,-1)\}/{\pm 1} $ &$\{\pm 1\} $\\&&(even number of $-1$)&(even number of $-1$)&
\\ \hline&&&& \\ ${\rm SO}(2n+1,\bbc) , \  \overline g$& $\{ 1\}$&$\{diag(1,\cdots,1,-1,\cdots,-1)\}  $ &$\{ diag(1,\cdots,1,-1,\cdots,-1)\}/{\pm 1} $ &$\{ 1\} $\\&&(even number of $-1$)&(even number of $-1$)&\\ \hline
\end{tabular}}}

\subsection{Real bundles over a circle}

Now consider a real bundle over a circle fixed by the real structure. Over the circle, 
there is then a fixed class in $H^1(\bbz/2\bbz,\, G)$. This class corresponds to the
restriction of the bundle to a point of the circle.
We can assume that the bundle is 
trivialized as a complex bundle. However, one has the real structure on the bundle 
defined by $\widetilde\sigma(g)\,=\, h(t)\cdot \sigma_G(g)$, where $t$ is a parameter 
along the circle. Since the real structures form a discrete set, now change 
trivializations along the circle, so that $h(t)\,=\, h(0)\,=\, h$. This of course can mean 
that going all the way round the circle ($t= 1$) one no longer has a trivialization of 
the bundle on the full circle; rather there is a holonomy $T\,\in\, Stab(h)$. Of course, if 
$T$ lies in the connected component of the identity of $Stab(h)$, one can then 
normalize to $T\,=\, 1$, and the bundle is trivial as a real bundle. More generally, the 
different real bundles corresponding to our element in $H^1(\bbz/2\bbz,\, G)$ are 
classified by $\pi_0(Stab(h))$, where $h$ represents our class in $H^1(\bbz/2\bbz, \,G)$.
Here the stabilizer is under the action $h\,\longmapsto\,\sigma_G(g)hg^{-1}$. The stabilizer 
is the real subgroup $ G_{\bbr,h}$ of elements invariant under the real structure 
$g\,\longmapsto\, h^{-1}\sigma_G(g)h$.

Now assume that there are several real circles. Fixing the real structure on the group, 
there is no guarantee that, having, for example, one type of real bundle structure over 
one circle forces it to be the same over the rest. To give just a simple example, let 
$H$ represent a non-trivial class in $H^1(\bbz/2\bbz,\, G)$; let
$$h\,:\,[0\, ,\pi]\,\longrightarrow\, G$$ 
be a path with $h(0)\,=\,\bbi$, $h(\pi) \,=\, H$, and extend this to $[-\pi\, , \pi]$ by 
$h(-\theta)\,=\, \sigma_G(h(\theta))^{-1}$. Now consider the torus parametrized by 
$\{(\theta,\psi)\in [0,\pi]\times[0,\pi]\}$, with real structure $(\theta\, ,\psi)\,
\longmapsto\, (-\theta\, ,\psi)$. Now consider the trivialized bundle $$\{(\theta\, ,\psi
\, , g)\,\in\, [0\, ,\pi]\times[0\, ,\pi]\times G\}$$ with real structure $(\theta\, ,\psi
\, ,g)\,\longmapsto\, 
(-\theta\, ,\psi\, ,\sigma_G(g)h(\theta))$; over the fixed locus $\theta\,=\,0$, it has fixed 
points, while over the fixed locus $\theta\,=\,\pi$, it does not. Thus, to each fixed 
circle, we should have a class in $H^1(\bbz/2\bbz,\, G)$, and then a class in 
$\pi_0(Stab(h))$.

{\it Example.} For $G\,= \,{\rm GL}(n,{\mathbb C})$ with the real structure given by
conjugation, one has two 
elements in $\pi_0(Stab(h))\,=\, \pi_0({\rm GL}(n,\bbr))$, and one obtains the Stiefel-Whitney 
class. For the real structure $a\,\longmapsto\, (a^*)^{-1}$, the groups $Stab(h)$ are simply 
the various groups $U(p,q)$, which are connected.

{\bf Table of components of $\pi_0(Stab (h)$:}

{\footnotesize{\begin{tabular}{|l|c|c|c|}\hline
&&& \\$G,\sigma_G(g)$&$h\in H^1(\bbz/2\bbz, G), [H^1_c(\bbz/2\bbz, G)]$& $ Stab (h)$&$\pi_0(Stab (h))$
   \\ \hline  &&&  \\ $\bbc^*,\ \overline g^{-1}$& $\{ \pm 1\},\ [\{ \pm i\}_{(c=-1)}]$&$S^1$&$\{1\}$
\\ \hline &&&  \\ $\bbc^*,\ \overline g $& $\{ 1\}$&$\bbr^*$&$\bbz/2\bbz$
\\ \hline &&&  \\ ${\rm GL}(2),\  (g^*)^{-1} $& $\{\pm 1\}$&$U(2)$&\{1\}
\\  &$iJ\simeq diag(1,-1)  $ &$U(1,1)$&$\{1\} $
\\ \hline &&&  \\ ${\rm GL}(n),n>2,\  (g^*)^{-1} $&  $ diag(1,\cdots,1,-1,\cdots,-1)   $ &$U(p,q) $ &$\{1\} $
\\ \hline &&&  \\ ${\rm SL}(2n), n>1,\  (g^*)^{-1} $& $\{diag(1,\cdots,1,-1,\cdots,-1)\}  $ &${\rm SU}(p,q)$ &$\{1\} $\\&(even number of $-1$)&&\\&$[\{diag(i,\cdots,i,-i,\cdots,-i)\}_{c=-1}$&&\\&(odd number of -i)]&&
\\ \hline &&&  \\ ${\rm SL}(2n+1), \  (g^*)^{-1} $&  $\{\pm diag(1,\cdots,1,-1,\cdots,-1)\}  $ &$
{\rm SU}(p,q)$ &$\{  1\} $\\&(choose sign so that ~$\det = 1$)&&
\\ \hline &&&  \\ ${\rm Gl}(2n),n>1,\  \overline g $& $\{1\}, [\{J\}_{c=-1}] $ &${\rm Gl}(2n,\bbr) [{\rm Gl}(n,\bbh)]$ &$\bbz/2\bbz,[\{  1\}]$
\\ \hline &&&  \\ ${\rm GL}(2n+1),\  \overline g $& $\{ 1\} $&${\rm GL}(2n+1,\bbr)$ &$\{ 1\} $
\\ \hline&&&  \\ ${\rm SO}(2n,\bbc) , n>1,\  \overline g$& $\pm\{diag(1,\cdots,1)\},  [\{J\}]  $ &${\rm SO}(2n), [SU^*(n)]$ &$\{ 1\} $\\ &$\{diag(1,\cdots,1,-1,\cdots,-1)\}$& ${\rm SO}(p,q)$&$\bbz/2\bbz$\\&$(p\neq 0\neq q $=number of $-1$ even) & &
\\ \hline&&&   \\ ${\rm SO}(2n+1,\bbc),\  \overline g$& $\pm\{diag(1,\cdots,1)\}  $ &${\rm SO}(2n), $ &$\{ 1\} $\\ &$\{diag(1,\cdots,1,-1,\cdots,-1)\}$& ${\rm SO}(p,q)$&$\bbz/2\bbz$\\&$(p\neq 0\neq q $=number of $-1$ even) & &\\ \hline
\end{tabular}}}

\subsection{Constructing a universal bundle}

We recall Milnor's construction of the classifying space as an infinite 
join \cite{Mi}. We take $n$ copies of $[0,1]\times G$, and form the 
space
$$
EG^n\,=\, \{((t_1,g_1),\cdots ,(t_n, g_n))\in ([0,1]\times G)^n| 
\sum_i t_i 
= 1\}/\equiv
$$
Here the equivalence relation is given by identifying $(0, g_i)$ with 
$(0,g'_i)$. This space has a free right action of $G$, given by right 
multiplication on each factor: $(t_i\, , g_i)\,\longmapsto \,(t_i\, , g_ig)$. We 
denote the quotient by $BG^n$, and $EG^n$ is a principal $G$-bundle 
over $BG^n$. The space is $n-2$ connected, and taking an appropriate 
limit in $n$ gives the classifying space.

We now put in a real or pseudo-real structure. Choosing an $h$ representing a class 
$\alpha\,\in\, H^1_c(\bbz/2\bbz,\, G)$, one then simply acts on all the $g_i$ by 
$g_i\,\longmapsto\, h\sigma_G(g_i) $. This then automatically satisfies $ 
g_ig\,\longmapsto\, h\sigma_G(g_i)\sigma_G(g )$, and so orbits are mapped to orbits, and
the involution descends to $BG$. If the class $\alpha$ is trivial (this forces $c\,=\, e$) the
fixed point set of the involution on $EG$ is then $EG_\bbr\,\longrightarrow\, BG_\bbr$. If
$\alpha$ is not trivial, the involution on $EG$ has no fixed points, but on the base, for a
fixed point one has the condition that $a_i\,=\, \sigma_G(g_i)hg_i^{-1}$ be the same for
all $i$ for which $t_i\,\neq\, 0$; one can normalize $a_i$ to $h$, and so one is restricting
to the set $g_i\,\in\, Stab(h)\,= \,G_{\bbr,h}$, the group of real points for the real structure 
$h^{-1}\sigma_Gh$. The fixed point set is then $BG_{\bbr,h}$. We use the same notation for 
the pseudo-real structures, i.e., $Stab(h)\,=\, G_{\bbr,h}$.

We will now build a copy of the classifying space which has all of these real or pseudo-real
structures at once. We take representative elements $h_{\alpha_j}$ for each of the $n$
elements $\alpha_j\, , j\,=\,1,\cdots,n$ of $H^1_c(\bbz/2\bbz,\, G)$. Now define the action by 
\begin{align*}
(\cdots ,(t_{k(n+2)+1}, g_{k(n+2)+1}),&(t_{k(n+2)+2}, g_{k(n+2)+2}),\cdots,(t_{k(n+2)+n}, g_{k(n+2)+n})\cdots)
\end{align*}
\begin{align*}
\longmapsto\,
(\cdots(t_{k(n+2)+1}, h_{\alpha_1}\sigma_G(g_{k(n+2)+1})),&(t_{k(n+2)+2},h_{\alpha_2}
\sigma_G(g_{k(n+2)+2}),
\end{align*}
\begin{align*}
\cdots,(t_{k(n+2)+n}, h_{\alpha_n}\sigma_G(g_{k(n+2)+n}),\cdots)\, .
\end{align*}

One has, in the fixed point set on the quotient $BG$, a disjoint union of the 
different $BG_{\bbr,h_{\alpha_j}}$, each with a base point $(t_i\, , g_i)\,=\, 
(\delta_{i,j}\, , e)$. Above each component, we have a reduction of the structure group 
to $G_{\bbr,h_{\alpha_j}}$.

Now suppose that we have a Riemann surface $X$ with an antiholomorphic
involution $\sigma_X$, with fixed curves $\{\gamma_i\}$ 
for $\sigma_X$. As mentioned in the first section (more details 
are given in \cite{BHH}), one can write $X$ as a union of $X_0$ and $\sigma_X(X_0)$, 
where $X_0$ and $\sigma_X(X_0)$ are surfaces with boundary and intersect along the 
$\{\gamma_i\}$ and possibly one or two other boundary curves $\delta_1\, ,\delta_2$, which 
are mapped to themselves by $\sigma_X$. One can build a cell decomposition of $X_0$ 
composed of a set of 0-cells, 1-cells that are either fixed (and so part of a 
$\gamma_i$), or lie in the $\delta_j$, or whose interiors do not intersect the 
$\gamma_i$ (these last ones span some of the homology cycles of $X$), and a two 2-cell 
$e$ whose interior is disjoint from $\sigma_G(e)$. Let the portion of the 
one-skeleton that is away from the boundary of $X_0$ be denoted by $S$. Choose for 
each $\gamma_i$ a class $\alpha_i\,\in\, H^1(\bbz/2\bbz, \,G)$; bundles over the $\gamma_i$ 
corresponding to the class $\alpha_i$ are classified by an element $\beta_i$ of the 
fundamental group $\pi_1(BG_{\bbr,\, h_{\alpha_i}}) \,= \,
\pi_0(G_{\bbr,\, h_{\alpha_i}})$. Now let us try to fill in the map to the rest of $X_0$.

We begin with the boundary curves $\delta_i$. The $G$-bundle is trivial over these 
components; the curves must however be mapped $\sigma_G$ invariantly into $BG$. There is, 
up to homotopy, one way of doing this, using the fact that $\pi_0(BG)\,=\,\pi_1(BG)\,=\,0$. In
the same way, the remainder of the 1-skeleton maps uniquely up to homotopy into $BG$. Once 
this is fixed, what is left is a map of the disk into $BG$, determined up to an element of 
$\pi_2(BG)$. In short, for the relative homotopy $[(X_0\, , \sqcup_i \gamma_i)\, , (BG, 
\sqcup_i BG_{\bbr\, ,h_{\alpha_i}})]$, one has a fibration of sets, describing the
equivalence classes of real bundles:
$$\pi_2(BG)\,=, \pi_1(G)\,\longrightarrow\,
[(X_0\, , \sqcup_i \gamma_i)\, , (BG\, , 
\sqcup_i BG_{\bbr\, ,h_{\alpha_i}}])
$$
$$
\longrightarrow \, \prod_i \pi_1(BG_{\bbr\, ,h_{\alpha_i}})
\,= \,\prod_i \pi_0(G_{\bbr\, ,h_{\alpha_i}})\, .$$
As we have restricted to $X_0$, the requirement of equivariance has almost disappeared; 
there is one residual constraint of equivariance, in that the boundary curves $\delta_i$ 
have to be mapped equivariantly; we have seen however that this is trivial.
 
While the general question of building and classifying equivariant bundles is intricate, 
in this simple case of curves, one has:

\begin{theorem}
The topological classes of pseudo-real bundles on $X$ for a fixed $c\,\in \,H^2(\bbz/2\bbz,\,Z)$
are determined by equivariant homotopy classes of equivariant maps of $X$ into the classifying
space. This amounts to the following:
\begin{itemize}
\item The choice, for each fixed curve $\gamma_i$, of a class $\alpha_i\, ,i\,=\,1,\cdots
,r$ of $H^1_c(\bbz/2\bbz, \,G)$; this is the topological type of the bundle over each point
of $\gamma_i$.

\item The choice, for each fixed curve $\gamma_i$, of a generalized Stiefel Whitney class 
$\beta_i$ in $\pi_1(BG_{\bbr,h_{\alpha_i}})\,=\, \pi_0(G_{\bbr,h_{\alpha_i}})$.

\item The choice of a relative class $\rho$ in $[(X_0\, , \sqcup_i \gamma_i)\, 
 (BG\, , \sqcup_i BG_{\bbr,h_{\alpha_i}})]$, mapping the boundary curves $\gamma_i$ to their corresponding Stiefel Whitney classes, and the curves $\delta_i$ equivariantly. 
\end{itemize}
\end{theorem}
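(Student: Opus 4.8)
The plan is to establish the theorem by reducing the equivariant classification problem to the non-equivariant one on the ``half-surface'' $X_0$, exploiting obstruction theory over a low-dimensional CW complex. The starting observation is that the real structure $\sigma_X$ identifies $X$ with the double of $X_0$ along its boundary, and an equivariant map $X\to BG$ (where $BG$ carries the $c$-pseudo-real structure assembled in the previous subsection) is the same thing as an arbitrary map $X_0\to BG$ sending each fixed circle $\gamma_i$ into the appropriate fixed-point component $BG_{\bbr,h_{\alpha_i}}$ and sending the extra boundary curves $\delta_j$ equivariantly into $BG$. The first step is to make this reduction precise: given an equivariant map on $X$, restrict to $X_0$; conversely, given a map on $X_0$ with the stated boundary behaviour, extend it to $\sigma_X(X_0)$ by the formula $e\mapsto (h\sigma_G\cdot)$ applied to the chosen cocycle representatives, and check the two pieces agree along $\gamma_i\cup\delta_1\cup\delta_2$ precisely because the $\gamma_i$ land in the fixed set and the $\delta_j$ are mapped equivariantly. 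One must also check that equivariant homotopies correspond under this dictionary, which is the same extension argument applied to $X\times[0,1]$.

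The second step is to run the obstruction-theoretic analysis on $X_0$, using the cell decomposition recalled in Section~\ref{se-top}: all $0$-cells lie on the boundary, and apart from the $\gamma_i$- and $\delta_j$-cells the $1$-cells have interiors in the interior of $X_0$, with a single top $2$-cell $e$. Over each $\gamma_i$ a bundle with fixed topological type $\alpha_i$ is classified by a homotopy class of maps $S^1\to BG_{\bbr,h_{\alpha_i}}$, i.e.\ by $\pi_1(BG_{\bbr,h_{\alpha_i}})\,=\,\pi_0(G_{\bbr,h_{\alpha_i}})$, which is the generalized Stiefel--Whitney class $\beta_i$; this uses that $BG_{\bbr,h_{\alpha_i}}$ is connected and the description of $Stab(h_{\alpha_i})=G_{\bbr,h_{\alpha_i}}$ given above. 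Next, extend over the remaining $1$-cells and the $\delta_j$: since $\pi_0(BG)=\pi_1(BG)=0$, the $\delta_j$ admit a map unique up to homotopy, and moreover it can be chosen $\sigma_G$-equivariantly in an essentially unique way (the residual equivariance constraint), and likewise the interior $1$-cells extend uniquely up to homotopy once the boundary values are fixed. Finally the single $2$-cell $e$ must be filled: the boundary of $e$ already maps to $BG$, and the set of extensions over the disk is a torsor under $\pi_2(BG)=\pi_1(G)$, modulo the relation coming from homotopies of the boundary data.

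The third step is to package these three successive choices --- the $\alpha_i\in H^1_c(\bbz/2\bbz,G)$, the $\beta_i\in\pi_1(BG_{\bbr,h_{\alpha_i}})$, and the relative class $\rho\in[(X_0,\sqcup_i\gamma_i),(BG,\sqcup_i BG_{\bbr,h_{\alpha_i}})]$ --- into the fibration of sets already displayed before the theorem statement,
$$
\pi_1(G)\,\longrightarrow\,[(X_0,\sqcup_i\gamma_i),(BG,\sqcup_i BG_{\bbr,h_{\alpha_i}})]\,\longrightarrow\,\prod_i\pi_0(G_{\bbr,h_{\alpha_i}})\, ,
$$
and to observe that, under the dictionary of the first step, this set of relative homotopy classes is exactly the set of equivariant homotopy classes of equivariant maps $X\to BG$ with fixed $c$, hence exactly the set of topological types of $c$-pseudo-real bundles on $X$. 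Since $X_0$ is homotopy equivalent to its $2$-skeleton and $BG$ is a classifying space, ``topological type of bundle'' and ``homotopy class of classifying map'' coincide, so no higher obstructions intervene.

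I expect the main obstacle to be the careful verification in the first step that the equivariant and non-equivariant problems really match --- specifically, that the equivariance constraint on the $\delta_j$ is genuinely the \emph{only} residual constraint and that it is unobstructed. One has to confront the two cases (Type~I, where there are no $\delta_j$ and $X_0=X/\sigma_X$ directly, versus Types~0 and~II, where $\delta_1$, and possibly $\delta_2$, is a concatenation $I_0\cup\sigma_X(I_0)$ swapped or preserved by $\sigma_X$), and check that in the concatenated case the equivariant extension over $\delta_1$ amounts to choosing a map on the single interval $I_0$ with prescribed endpoints, which exists and is unique up to homotopy because $BG$ is simply connected. A secondary subtlety is bookkeeping the basepoints and the non-freeness of the various group actions (already flagged in Proposition~\ref{sequence}), so that the ``fibration of sets'' is interpreted correctly as a stratified description rather than a genuine fiber bundle; but none of this changes the enumeration, only its internal structure.
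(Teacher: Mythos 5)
Your proposal is correct and takes essentially the same route as the paper: restrict to $X_0$, send each fixed circle $\gamma_i$ into the fixed-point component $BG_{\bbr,h_{\alpha_i}}$ of the involution built into the Milnor model of $BG$, observe that the residual equivariance constraint on the $\delta_j$ is unobstructed because $\pi_0(BG)\,=\,\pi_1(BG)\,=\,0$, and package the cell-by-cell choices into the fibration of sets with fiber $\pi_2(BG)\,=\,\pi_1(G)$ over $\prod_i\pi_0(G_{\bbr,h_{\alpha_i}})$. The points you flag as delicate (the Type~0/II boundary curves and the basepoint bookkeeping) are treated in the paper at the same level of detail as in your sketch, so no further comparison is needed.
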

 We note that the class $\rho$ gets ``doubled" when one extends equivariantly to $X = X_0\cup \sigma_X(X_0)$; this results in a parity constraint on the degree of the bundle over all of $X$, as in \cite{BHH}.
 
One can give a fairly explicit description of this in terms of the space $EG^2$, 
and its quotient $BG^2$. It is not hard to see that the latter space is 
homeomorphic to $\Sigma(G)$, the unreduced suspension of $G$, obtained by taking 
the product of $G$ with the unit interval, and collapsing $\{0\}\times G$ to a 
point, and $\{1\}\times G$ to a point. One can obtain the maps we want into $BG$ as 
maps to $BG^2$. In general, we are not able to make our copy of $BG^2$ equivariant 
in a suitable way; nevertheless, one is able to map the surface $X_0$ into the copy 
of $BG^2$ in a way which gives the relationship between our invariants associated 
to the class $h$ and the Stiefel-Whitney classes to the overall degree of the 
bundle on $X$.

Indeed, as we have seen, we can contract cycles on $X_0$ so that the result 
$\widetilde X_0$ is a sphere punctured along disks, with boundary circles 
$\gamma_i$ and possibly $\delta_j$; we can consider maps from $\widetilde X_0$ 
instead, as what we have contracted is homotopically trivial in $BG$. Now one can 
write $\widetilde X_0$ up to homotopy as the unreduced suspension $\Sigma_C$ of 
$C$, a circle punctured along intervals, with the boundary of each circle being a 
pair of points on one of the $\gamma_i$, or $\delta_j$. We choose the points on 
$\delta_j$ so that they are preserved under the involution. One can then give a map 
$\widetilde X_0\,=\, \Sigma C\,\longrightarrow\, \Sigma G$ in terms of a map 
$C\,\longrightarrow\,G$. We can even restrict somewhat, so that the map of each 
left hand boundary point of the gaps in $C$ is mapped to the identity. For the 
boundary points corresponding to the $\gamma_i$, map the right boundary point to a 
representative element of the Stiefel-Whitney class in 
$\pi_0(G_{\bbr,h_{\alpha_i}})$, For $\delta_j$, map the right boundary point to the 
element $c$. Now one chooses an element $\alpha$ of $Map (C, G)$ extending the maps 
chosen of the boundary points. Consider the union $D$ of two copies $C_1, C_2$ of 
$C$ along their boundaries; this is a family of circles. Extending the map from $C$ 
to $D$ by $\alpha, \sigma_G\circ \alpha$ then gives a map from a family of circles 
into $G$, representing a class in $H^1(G,\bbz)\,=\, \pi_1(G)$. This will be the 
characteristic class of the bundle one has on $X$, and one can deduce the 
possibilities for this cycle from the choices made; in particular, one can recover 
the constraints of \cite{BHH} for real vector bundles that $c_1(E)$ mod 2 is the 
sum of the Stiefel Whitney classes along the real components $\gamma_i $, and that 
for quaternionic vector bundles, that $c_1(E)$ is equal to the product of the rank 
and the genus minus one, mod two.

\section{Stable pseudo-real principal bundles}

\subsection{Stable and semistable principal bundles}

We now want to consider semistable and stable holomorphic real bundles. we recall 
some results from \cite{BH}.

Let $(E \, ,\sigma_E)$ be a pseudo-real principal $G$--bundle
over $X$. Let
$$
\text{Ad}(E)\, :=\, E \times^G G\, \longrightarrow\, X
$$
be the group-scheme over $X$ associated to $E $ for the adjoint
action of $G$ on itself. As in \cite{BH}, the involution $\sigma_E$ induces 
\begin{equation}\label{e5}
\begin{matrix}
\text{Ad}(E ) &\stackrel{\sigma_{\text{Ad}}}{\longrightarrow}&
\text{Ad}(E )\\
\Big\downarrow && \Big\downarrow\\
X&\stackrel{\sigma_X}{\longrightarrow}& X
\end{matrix}
\end{equation}

Let
$$
\text{ad}(E )\, :=\, E \times^G{\mathfrak g}
\, \longrightarrow\, X
$$
be the bundle of Lie algebras over $X$ associated to $E $ for the 
adjoint action of $G$ on ${\mathfrak g}\, :=\, \text{Lie}(G)$;
it is called the \textit{adjoint} vector bundle. Again, the involution $\sigma_E$ induces
an antiholomorphic involution $\sigma_{ad}$ of this bundle $\text{ad}(E)$.

A \textit{proper parabolic} subgroup-scheme of $\text{Ad}(E )$ is
a Zariski closed analytically locally trivial proper
subgroup-scheme $\underline{P}\,\subset\,
\text{Ad}(E_G)$ such that $\text{Ad}(E_G)/\underline{P}$ is compact. For
an analytically locally trivial
subgroup-scheme $\underline{P}\,\subset\, \text{Ad}(E_G)$, let
$\underline{\mathfrak p}\, \subset\, \text{ad}(E_G)$ be the bundle of
Lie subalgebras corresponding to $\underline{P}$.

Let $\underline{P}\,\subset\, \text{Ad}(E_G)$ be a proper parabolic subgroup-scheme.
For each point $x\, \in\, X$, the unipotent radical of the fiber
$\underline{P}_x$ will be denoted by $R_u(\underline{P})_x$. We recall that
$R_u(\underline{P})_x$ is the unique
maximal normal unipotent subgroup of $P_x$. We have a holomorphically
locally trivial subgroup-scheme
$$
R_u(\underline{P})\, \subset\, \underline{P}
$$
whose fiber over any $x\, \in\, X$ is $R_u(\underline{P})_x$. The quotient
$\underline{P}/R_u(\underline{P})$ is a group-scheme over $X$.

A \textit{Levi subgroup-scheme} of $\underline{P}$ is an analytically
locally trivial subgroup-scheme
$L(\underline{P})\, \subset\, \underline{P}$ such that the composition
$$
L(\underline{P})\, \hookrightarrow\, \underline{P}\, \longrightarrow\, \underline{P}/R_u(\underline{P})
$$
is an isomorphism. It should be emphasized that a Levi subgroup-scheme
does not exist in general. In vector bundle terms, the existence of a 
Levi subgroup-scheme corresponds to some extension classes being trivial.

\begin{definition}\label{def3}
A pseudo-real principal $G$--bundle $(E \, ,\sigma_E)$
over $X$ is called \textit{semistable} (respectively, \textit{stable})
if for every proper parabolic subgroup-scheme $\underline{P}\,\subset\, 
\text{Ad}(E )$ invariant under $\sigma_{\text{Ad}}$, meaning $\sigma_{\text{Ad}}(\underline{P})\, \subset\, \underline{P}$,
the inequality
$$
{\rm degree}(\underline{{\mathfrak p}})\, \leq\, 0~\, \ {\rm (respectively,~}\,
{\rm degree}(\underline{{\mathfrak p}})\, <\, 0{\rm )}
$$
holds.
\end{definition}

\begin{definition}\label{def4}
A semistable pseudo-real principal $G$--bundle $(E \, ,\sigma_E)$
over $X$ is called \textit{polystable} if either $(E \, ,\sigma_E)$ is stable,
or there is a proper Levi subgroup-scheme $L(\underline{P})\,\subset\, \text{Ad}(E)$,
such that the following conditions hold:
\begin{enumerate}
\item $\sigma_{\text{Ad}}(\underline{P})\, \subset\, \underline{P}$, and 
$\sigma_{\text{Ad}}(L(\underline{P}))\, \subset\, L(\underline{P})$, and

\item for any proper parabolic subgroup-scheme $\underline{P}'\, \subset\, L(\underline{P})$
with $\sigma_{\text{Ad}}(\underline{P}')\, \subset\, \underline{P}'$, we have
$$
{\rm degree}(\underline{{\mathfrak p}}')\, <\, 0\, ,
$$
where $\underline{{\mathfrak p}}'$ is the bundle of Lie algebras corresponding
to $\underline{P}'$.
\end{enumerate}
\end{definition}

The above definition of (semi)stability coincides with the one in \cite[page 304,
Definition 8.1]{Be}, but it differs from the definition
of (semi)stability given in \cite{Ra}. The definitions of Behrend
and Ramanathan are equivalent if the base field is
$\mathbb C$. Over $\bbr$, Behrend's definition works better.
See \cite{BH} for a discussion. From \cite{BH} one also has:
 
\begin{proposition}[\cite{BH}]\label{prop1}
A pseudo-real principal $G$--bundle $(E \, ,\sigma_E)$ over $X$ is
semistable (respectively, stable) if the principal 
$G$--bundle $E $ is semistable (respectively, stable).

For a semistable pseudo-real principal $G$--bundle $(E \, ,\sigma_E)$, 
the principal $G$--bundle $E$ is semistable.

A pseudo-real principal principal $G$--bundle $(E \, ,\sigma_E)$ is polystable
if and only if the principal $G$--bundle $E $ is polystable.
\end{proposition}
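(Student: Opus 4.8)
The statement to prove (Proposition \ref{prop1}) has three parts: (i) if $E$ is semistable (resp.\ stable) as a $G$-bundle, then $(E,\sigma_E)$ is semistable (resp.\ stable); (ii) if $(E,\sigma_E)$ is semistable, then $E$ is semistable; (iii) $(E,\sigma_E)$ is polystable iff $E$ is polystable. The paper says this follows from \cite{BH}, so the job is to sketch how the argument goes.

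My plan:

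\begin{proof}[Proof sketch]
Part (i) is essentially trivial from the definitions. A proper parabolic subgroup-scheme $\underline P\subset\mathrm{Ad}(E)$ invariant under $\sigma_{\mathrm{Ad}}$ is in particular a proper parabolic subgroup-scheme of $\mathrm{Ad}(E)$, so if $E$ is semistable (resp.\ stable) in the sense that $\mathrm{degree}(\underline{\mathfrak p})\le 0$ (resp.\ $<0$) for every such $\underline P$, then the inequality holds a fortiori for the $\sigma_{\mathrm{Ad}}$-invariant ones, which is exactly Definition \ref{def3} for $(E,\sigma_E)$.

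Part (ii) is the substantive direction and is the main obstacle. Suppose $E$ is not semistable; we must produce a $\sigma_{\mathrm{Ad}}$-invariant proper parabolic subgroup-scheme violating the inequality. The key input is the canonical (Harder--Narasimhan) reduction of $E$: by Behrend/Atiyah--Bott (and as used in \cite{BH}), an unstable $G$-bundle $E$ admits a \emph{unique} canonical parabolic reduction $E_P\subset E$, equivalently a unique canonical proper parabolic subgroup-scheme $\underline P\subset\mathrm{Ad}(E)$, characterized by a maximality/uniqueness property, and with $\mathrm{degree}(\underline{\mathfrak p})>0$ when $E$ is not semistable. Now apply $\sigma_{\mathrm{Ad}}$: since $\sigma_E$ is an antiholomorphic bundle automorphism covering $\sigma_X$ and compatible with the $G$-action (up to $\sigma_G$ and the central element $c$), conjugating the HN reduction of $E$ by $\sigma_E$ produces a parabolic reduction of $\sigma_X^*E$ which, by uniqueness of the canonical reduction and the fact that degrees are preserved (complex conjugation of a holomorphic bundle over $X$ pulls back to a bundle of the same degree over $X$ via $\sigma_X$), must be the canonical reduction again. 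Hence $\sigma_{\mathrm{Ad}}(\underline P)=\underline P$, so $\underline P$ is a $\sigma_{\mathrm{Ad}}$-invariant proper parabolic with $\mathrm{degree}(\underline{\mathfrak p})>0$, contradicting semistability of $(E,\sigma_E)$. The point requiring care is exactly the assertion that $\sigma_E$ genuinely carries the canonical reduction to the canonical reduction of the conjugate bundle; this uses that the defining properties of the canonical reduction (the slope ordering of the HN type, plus the instability-of-sub/stability-of-Levi conditions) are preserved by an antiholomorphic isomorphism, which is the content borrowed from \cite{BH}. The resp.\ ``stable'' assertion of part (i) has no converse, so nothing more is needed there.

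For part (iii), the ``only if'' direction: if $(E,\sigma_E)$ is stable then $E$ is semistable by part (i)-type reasoning combined with part (ii), hence admits its Jordan--H\"older / socle reduction to a Levi $L(\underline Q)$ with $E_{L}$ a polystable bundle of degree-zero adjoint; one argues as in part (ii) that the \emph{canonical} such reduction (the admissible reduction to a Levi whose associated bundle is stable, unique up to the relevant ambiguity) is again $\sigma_{\mathrm{Ad}}$-invariant by uniqueness, so it satisfies the conditions of Definition \ref{def4}; thus $E$ polystable $\Rightarrow (E,\sigma_E)$ polystable in the strong sense, and conversely an $L(\underline P)$ as in Definition \ref{def4} together with semistability of $E$ (from part (i)) forces $E$ to be polystable because the degree-zero condition on all $\sigma_{\mathrm{Ad}}$-invariant parabolics of $L(\underline P)$ upgrades, again by a uniqueness/conjugation argument applied to canonical reductions inside $L(\underline P)$, to the degree-zero condition on \emph{all} parabolics of $L(\underline P)$, which is precisely polystability of $E$. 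The conjugation-and-uniqueness mechanism is the same in all three parts; once it is set up (as in \cite{BH}) the statement follows. Full details are in \cite{BH}.
\end{proof}
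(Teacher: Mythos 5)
The paper itself gives no proof of this statement: Proposition \ref{prop1} is imported verbatim from \cite{BH}, so your sketch is being measured against the argument there rather than against anything in this text. Your parts (i) and (ii) are correct and are the expected mechanism: (i) is immediate because Definition \ref{def3} imposes the degree inequality only on $\sigma_{\text{Ad}}$-invariant parabolic subgroup-schemes, and (ii) works because Behrend's canonical (Harder--Narasimhan) reduction of an unstable bundle is unique, has $\mathrm{degree}(\underline{\mathfrak p})>0$, and is carried by the antiholomorphic automorphism $\sigma_{\text{Ad}}$ (which preserves degrees, since $\deg \sigma_X^*\overline{V}=\deg V$) to a reduction with the same characterizing properties, hence to itself.

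The genuine gap is in part (iii), where you claim the same ``uniqueness-and-conjugation'' mechanism applies; it does not. Unlike the Harder--Narasimhan parabolic, the Levi reduction witnessing polystability (the Jordan--H\"older/socle-type reduction) is \emph{not} unique as a subgroup-scheme of $\text{Ad}(E)$ --- it is unique only up to the action of the automorphism group --- so $\sigma_{\text{Ad}}$-invariance cannot be deduced from uniqueness; for $E$ polystable one must actually construct an invariant Levi, e.g.\ by grouping stable constituents into $\sigma$-orbits (the isotypical pieces are canonical, their decomposition into stable factors is not). Worse, in the converse direction your proposed ``upgrade'' from the negative-degree condition on $\sigma_{\text{Ad}}$-invariant parabolics of $L(\underline P)$ to the condition on \emph{all} parabolics of $L(\underline P)$ is unjustified: condition (2) of Definition \ref{def4} only says the invariant Levi reduction is stable \emph{as a pseudo-real bundle}, and the paper records right after the proposition that pseudo-real stability does not imply stability of the underlying bundle, so no formal conjugation argument can perform this upgrade. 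What is actually required (and what \cite{BH} supplies) is input specific to polystability, e.g.\ that a stable pseudo-real bundle has polystable underlying bundle --- obtained via the uniqueness of the Einstein--Hermitian/flat unitary connection, or, for vector bundles, the $V\oplus\sigma_X^*\overline{V}$ decomposition. There are also two small slips in (iii): what you label the ``only if'' direction is in fact the ``if'' direction, and you invoke part (i) where part (ii) is the statement needed to get semistability of $E$ from pseudo-real semistability.
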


The analog of Proposition \ref{prop1} for stable bundles is not true; see \cite{BH}.

\section{Gauge theory}
 
For any principal bundle, one can consider the affine space of connections on it, 
and this, following Atiyah and Bott (\cite{AtBo}), has been an extraordinarily 
effective tool for studying stable holomorphic bundles. The general idea, promoted 
by Atiyah and Bott, and put on a firmer Morse-theoretical footing by Daskalopoulos 
\cite{Da}, is that one considers the affine space of hermitian connections on a 
fixed topological bundle, and then quotients this by the action of the gauge group. 
This is very close to a classifying space for the group, and so one can compute its 
cohomology. In the meantime, one has the $L^2$ norm of the curvature, which 
provides a Morse function on this quotient space. Atiyah and Bott then show that 
this function is equivariantly perfect, so that one can obtain the cohomology of 
the minimal energy stratum (by the Narasimhan-Seshadri theorem, this is the moduli 
of semi-stable bundles) in terms of the cohomology of the whole, ``minus'' the 
cohomology of the higher critical sets, which are computable in terms of moduli 
spaces of bundles of lower rank, allowing an inductive process. In particular, from 
the Morse theory, one sees fairly immediately that the moduli space of bundles is 
connected.

One can look at the action under pull back by the real structure on all these spaces, and obtain the various real moduli as components of the fixed point set on the complex moduli. Alternately, however, one can proceed as in \cite{BHH}, fix the real topological structure of the bundle, and build the invariance directly into the space of connections and into the gauge group. This will allow us to see that the spaces of bundles are connected, for each topological type.

Indeed, let us suppose chosen a $\sigma_X$ invariant K\"ahler metric on $X$. Let us fix a $\sigma_E$-invariant reduction $E_K$ of $E$ to our $\sigma_G$-invariant maximal compact group $K$ (in vector bundle terms, this would be an invariant metric; the existence of such reductions is guaranteed by the fact that $G/K$ is contractible). Let $\mathcal A$ be the affine space of $K$-connections on the bundle $E_K$, and let ${\mathcal A}^\sigma$ be the connections invariant under the involution or pseudo-involution $\sigma^E$ on $E_K$. This is also an affine space. It is acted on by the group of $\sigma_K$-equivariant automorphisms $\mathcal K^\sigma$ of $E_K$. 

The space ${\mathcal A}^\sigma/\mathcal K^\sigma $ of invariant connections can be 
described in a way similar to that given in \cite{BHH}. The point is that the 
action of $\mathcal K^\sigma$ is almost free; stabilizers are finite dimensional, 
and generically just the real centers of the group. This allows us to consider
the classifying space $B{\mathcal K}^\sigma$ 
instead of the quotient ${\mathcal A}^\sigma/\mathcal K^\sigma $, and then to 
``adjust''. (For computations involving cohomology, this adjustment involves using equivariant cohomology throughout, and then relating the final result to ordinary cohomology; for homotopical issues, the adjustment depends more on the precise situation, and basically involves dealing with the covering corresponding to the center of the group.) 

The space $\mathcal K^\sigma$, in turn, is a space of invariant sections 
of the automorphism bundle of $E$; along subspaces over which the bundle is 
trivial, this is a mapping space into $K$. The main tool for treating the gauge 
group, and so the classifying space is the description of the surface $X$ as a 
union $X_0\cup \sigma(X_0)$; the union is taken along the boundary. Essentially, if 
one has $\sigma$-invariance along the boundary, one can then complete a map from 
$X_0$ to a $\sigma$-invariant map on $X$. If $\mathcal K(A)$ (respectively, 
$\mathcal K(A)^\sigma$) refers to automorphisms (respectively, invariant 
automorphisms) of the bundle along $A$, one has a diagram, where the column on the 
left is a pull back of the column on the right:
$$\begin{matrix} 
\mathcal K^\sigma(X) & \longrightarrow &\mathcal K (X_0)\\
\Big\downarrow && \Big\downarrow\\
\mathcal K^\sigma(\partial X_0)&\longrightarrow & \mathcal K (\partial X_0).
\end{matrix}$$
This is essentially the approach adopted in \cite{BHH}; one can use this to compute the first few homotopy groups of $\mathcal K^\sigma(X) $; the answers are already fairly intricate for ${\rm GL}(n,{\mathbb C})$ with the
standard conjugation real structure, and we will not pursue this here. We note that the structure of $\mathcal K^\sigma(\partial X_0)$ depends quite crucially on the
topological structure $t$ one has fixed.

One has the $L^2$ norm of the curvature functional
$$ F\,:\, {\mathcal A}^\sigma/\mathcal K^\sigma \,\longrightarrow\, \bbr$$
and the theorem of Atiyah and Bott extends to our equivariant context:

\begin{theorem}
The minima of the functional $F$ are attained at $\sigma_E$-invariant connections with 
constant central curvature. The space of minima is homeomorphic to the moduli space 
${\mathcal M}^\sigma_t$ of $\sigma_G$-semistable principal $G$-bundles of the given topological 
type $t$.
\end{theorem}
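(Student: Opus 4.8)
The strategy is to bootstrap from the complex (non-equivariant) Atiyah--Bott--Narasimhan--Seshadri picture through the closed affine inclusion $\mathcal A^\sigma\,\subset\,\mathcal A$, using that the Yang--Mills flow on $\mathcal A$ is canonically defined and hence preserves the $\sigma_E$-invariant locus. For the lower bound, note first that the pointwise Chern--Weil estimate underlying \cite{AtBo} is insensitive to the involution: for every $K$-connection $A$ on $E_K$ one has $F(A)\,\geq\, C_t$, where $C_t$ depends only on the topological type $t$, with equality if and only if $F_A$ equals a constant central element times the fixed $\sigma_X$-invariant Kähler form. In particular $F\,\geq\, C_t$ on the affine subspace $\mathcal A^\sigma$, while by \cite{NS}, \cite{AtBo} the bound $C_t$ is attained on $\mathcal A$ precisely at connections of constant central curvature. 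It therefore suffices to (i) produce one $\sigma_E$-invariant connection attaining $C_t$, and (ii) identify the set of all such, modulo $\mathcal K^\sigma$, with $\mathcal M^\sigma_t$.

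For (i), choose a $\sigma_E$-invariant $K$-reduction $E_K$ (possible since $G/K$ is contractible, as noted before the statement) and run the Yang--Mills / Donaldson heat flow on $\mathcal A$ starting from the associated connection. Since this flow is uniquely determined by its equation and the fixed $\sigma_X$-invariant metric on $X$, and pullback by the antiholomorphic $\sigma_E$ sends solutions to solutions, the entire flow remains in $\mathcal A^\sigma$. By Proposition \ref{prop1} the underlying $G$-bundle $E$ is semistable, so by \cite{Da} (equivalently by \cite{AtBo}, \cite{RS}, \cite{NS}) the flow converges to a Yang--Mills connection $A_\infty$ of constant central curvature. The underlying $C^\infty$ equivariant bundle is unchanged along the flow, so $A_\infty\,\in\,\mathcal A^\sigma$ realizes the minimum $C_t$, and its holomorphic structure together with the involution $\sigma_E$ is the polystabilization of $(E,\sigma_E)$ (compare \cite{BH}). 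This proves the first assertion, and shows at the same time that each S-equivalence class of semistable pseudo-real bundles of type $t$ has a polystable representative carrying a $\sigma_E$-invariant Hermitian--Einstein connection, unique up to $\mathcal K^\sigma$.

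For (ii), sending a $\sigma_E$-invariant connection of constant central curvature to the holomorphic $G$-bundle it defines, together with its involution, produces a polystable pseudo-real bundle of type $t$, hence a point of $\mathcal M^\sigma_t$; by the previous paragraph this map is surjective. It is injective because an isomorphism of the holomorphic $G$-bundles intertwining the two involutions is, by uniqueness of the Hermitian--Einstein reduction of a polystable bundle, compatible up to the real centre with the invariant metrics, hence is realized by an element of $\mathcal K^\sigma$; the residual ambiguity by the real centre is precisely the ``adjustment'' discussed before the statement and does not affect the point-set identification. Finally the bijection is a homeomorphism: the space of minima sits inside $\mathcal A^\sigma\,\subset\,\mathcal A$ and $\mathcal M^\sigma_t$ sits inside the complex moduli $\mathcal M_t$, compatibly, and continuity in both directions is inherited from the non-equivariant correspondence exactly as in \cite{BHH} for vector bundles.

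I expect the main obstacle to be the equivariant heat-flow step: one must check that the flow genuinely preserves $\mathcal A^\sigma$ --- which uses both the canonicity of the flow and the $\sigma_X$-invariance of the Kähler metric on $X$ --- and that its limit in the merely semistable case is precisely the pseudo-real polystabilization of \cite{BH}, equivalently that S-equivalence of pseudo-real bundles respects the topological type $t$. Once this identification is in place, the remainder is the standard Atiyah--Bott argument carried out $\sigma_E$-equivariantly, as in \cite{BHH}.
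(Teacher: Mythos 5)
Your proposal is correct and follows essentially the same route as the paper: the paper's own (very brief) argument invokes the Donaldson--Ramanathan theorem in the non-equivariant setting, the principle of symmetric criticality (uniqueness of the gradient/heat flow direction forces the flow to preserve $\sigma_E$-invariance, given the $\sigma_X$-invariant metric), and Proposition~\ref{prop1} relating pseudo-real semistability to ordinary semistability. You simply spell out in more detail the identification of the minima modulo $\mathcal K^\sigma$ with ${\mathcal M}^\sigma_t$, which the paper leaves implicit.
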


In the absence of $\sigma_E$, this is the theorem of Donaldson and Ramanathan \cite{ Do,Ra}. One has a
principle of symmetric criticality: minimizing energy  from an equivariant initial configuration gives an
equivariant result: the direction of the gradient flow is unique, and so must invariant under the involution. To apply the results of  Donaldson and Ramanathan, one needs the relationship between real semistability and ordinary semistability discussed above.

The space ${\mathcal A}^\sigma/\mathcal K^\sigma$ is connected. Let us now consider
higher order critical
points of the energy. These occur when there is a destabilizing parabolic $\sigma_{\text{Ad}}$-invariant
subgroup-scheme $\underline{P}$ of $\text{Ad}(E)$, that is 
$$
{\rm degree}({\underline{\mathfrak p}})\, >\, 0\, .
$$
The criticality in fact means that there is a proper $\sigma_{\text{Ad}}$-invariant 
Levi subgroup-scheme $L(\underline{P})$ of $\underline{P}$; on the level of Lie 
algebras one has $\underline{{\mathfrak l}}\subset \underline{{\mathfrak p}}$. The 
index of the critical point is given by the dimension of the real subspace of 
$H^1(X,\, (\underline{{\mathfrak p}}/\underline{{\mathfrak l}})^\vee)$. One has 
that the degree $d$ of $(\underline{{\mathfrak p}}/\underline{{\mathfrak 
l}})^\vee)$ is less than zero, since we have a destabilizing bundle. Riemann-Roch 
tells us that the dimension of $H^1(X, (\underline{{\mathfrak 
p}}/\underline{{\mathfrak l}})^\vee)$ is at least $-d+ k (g-1)$, where $k$ is the 
smallest codimension of a parabolic subgroup of $G$. Thus:

\begin{theorem}
If $g$ is at least two, the moduli space ${\mathcal M}^\sigma_t$ is 
connected.
\end{theorem}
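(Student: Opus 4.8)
The plan is to establish connectedness of ${\mathcal M}^\sigma_t$ by a Morse-theoretic argument on the functional $F$ over the quotient ${\mathcal A}^\sigma/\mathcal K^\sigma$, exactly paralleling the classical Atiyah--Bott/Daskalopoulos strategy but carried out equivariantly. First I would invoke the preceding theorem identifying the minimum stratum of $F$ with ${\mathcal M}^\sigma_t$, together with the stated fact that the total space ${\mathcal A}^\sigma/\mathcal K^\sigma$ is connected (indeed it is an affine space modulo a group, so even simply connected up to the center adjustments). The point is that connectedness of the minimum stratum follows once one knows that all strata other than the minimum have (real) codimension at least $2$: then removing the higher strata from the connected ambient space leaves the minimum stratum connected, since deleting a locally closed subset of codimension $\geq 2$ from a connected manifold (or from a space stratified with the minimum stratum open and dense) does not disconnect it.

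The core estimate is therefore the index computation sketched just before the statement. A higher critical point corresponds to a $\sigma_{\text{Ad}}$-invariant destabilizing parabolic subgroup-scheme $\underline P\subset\text{Ad}(E)$ with a $\sigma_{\text{Ad}}$-invariant Levi $L(\underline P)$, and the index of $F$ at that critical set is the real dimension of a real form of $H^1(X,(\underline{\mathfrak p}/\underline{\mathfrak l})^\vee)$. Since $\underline{\mathfrak p}/\underline{\mathfrak l}$ is a direct sum of negative root spaces for the destabilizing reduction, the dual has degree $d<0$; by Riemann--Roch and the vanishing $H^0(X,(\underline{\mathfrak p}/\underline{\mathfrak l})^\vee)=0$ (the dual of a bundle of strictly positive degree arising from the unipotent radical has no sections), $\dim H^1 = -d + \mathrm{rk}(g-1) \geq -d + k(g-1)$, where $k$ is the smallest codimension of a proper parabolic of $G$. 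The passage from the complex dimension of $H^1$ to the real index of the equivariant Morse function is the standard one: the involution $\sigma$ acts on $H^1(X,(\underline{\mathfrak p}/\underline{\mathfrak l})^\vee)$ antilinearly, so the relevant negative subspace is a real form of half the real dimension, i.e.\ the real index equals $\dim_{\mathbb C}H^1$, which is $\geq k(g-1) \geq 2$ when $g\geq 2$ (since $k\geq 1$, and in fact $k\geq 1$ always gives $\geq 1$; one checks $k(g-1)\geq 2$ either because $k\geq 2$ for $G$ not a torus, or handles the torus/rank-one adjustments directly, $d$ contributing further positivity). I would then conclude that every non-minimal stratum has codimension $\geq 2$, hence ${\mathcal M}^\sigma_t$ is connected.

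The main obstacle I anticipate is making the ``real index'' bookkeeping fully rigorous: one must check that at a $\sigma_{\text{Ad}}$-invariant critical set the negative normal bundle of $F$ is genuinely a real vector bundle whose rank is the real dimension of a $\sigma$-real form of $H^1(X,(\underline{\mathfrak p}/\underline{\mathfrak l})^\vee)$, and that this real form indeed has real dimension equal to $\dim_{\mathbb C}H^1$ (this uses that the antiholomorphic involution has no invariant complex line, exactly as in the elementary linear-algebra observations in the proof of Proposition~\ref{prop2.1}). A secondary subtlety is the boundary/degenerate cases where $k(g-1)$ alone might only give $1$: here one leans on the strict negativity $d<0$ to upgrade the bound, and on the ``adjustment'' involving the center noted in the gauge-theory section, so that the center-stabilizer contributions do not spoil the codimension count. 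Once these points are settled, the argument is the verbatim equivariant analogue of Atiyah--Bott, and connectedness follows formally.
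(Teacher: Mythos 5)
Your argument is essentially the paper's own proof: connectedness of ${\mathcal A}^\sigma/\mathcal K^\sigma$, the identification of the minimum of $F$ with ${\mathcal M}^\sigma_t$, and the Riemann--Roch estimate that the real index of each higher critical set is at least $-d+k(g-1)\geq 2$, with the conclusion drawn by the same Morse-theoretic step (the paper pushes paths past critical sets of index $\geq 2$, you remove strata of real codimension $\geq 2$ --- the same mechanism). The only wobble is your intermediate claim $k(g-1)\geq 2$, which fails for $k=1$, $g=2$ (e.g.\ an ${\rm SL}(2)$ factor), but as you yourself note the term $-d\geq 1$ restores the bound, exactly as in the paper's estimate.
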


Indeed, our estimate tells us that the index of all the critical points on our 
space of connections is at least two, so that any path in ${\mathcal 
A}^\sigma/\mathcal K^\sigma$ joining two points of ${\mathcal M}^\sigma_t$ can be 
pushed down into ${\mathcal M}^\sigma_t$.

\section{Real components in the complex moduli space}

We have seen that once one fixes the topological type of a real bundle $(E,\sigma_E)$, one obtains a connected moduli space. Alternately, one can look at the effect of the real involution on $X$ on the complex moduli space ${\mathcal M}$. For this let 
$$\overline E \,=\, E\times ^{\sigma_G} G\, .$$
The involution is given by 
$$I_\sigma: E\,\longmapsto \,\sigma_X^* (\overline E)\, .$$
 The fixed points occur when there exists a lift of $\sigma_X$ to the bundle; the lift is 
not specified. If at a fixed point the bundle in question is regularly stable, i.e., is 
stable and has no automorphisms apart from those given by the center of the group, then 
one has a unique real or pseudo-real structure, up to automorphism; indeed, if there are 
two of them, then composing, the bundle has a non-central automorphism, contradicting our 
hypothesis. We recall from \cite {BHo}:
 
\begin{proposition}[{\cite[Proposition 2.3]{BHo}}] The smooth locus of the moduli space ${\mathcal M}$ of
$G$--bundles over $X$ consists of regularly stable bundles, as long as
$g\,\neq \,2$ or ${\rm PSL}(2,{\mathbb C})$ is not a factor of $G/Z$.\end{proposition}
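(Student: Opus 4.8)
The plan is to prove this by combining the deformation theory of principal bundles on a curve with Luna's \'etale slice theorem, which reduces the statement to the smoothness of certain affine GIT quotients. Every point of $\mathcal{M}$ has a unique polystable representative $E$; then $\mathrm{Aut}(E)$ is reductive, and since $X$ is a curve, $H^2(X,\,\mathrm{ad}(E))\,=\,0$, so deformations of $E$ are unobstructed and governed by $H^1(X,\,\mathrm{ad}(E))$. Luna's theorem then gives an \'etale-local identification of $\mathcal{M}$ near $[E]$ with
$$
H^1(X,\,\mathrm{ad}(E))\,/\!\!/\,\mathrm{Aut}(E)\, ,
$$
on which $\mathrm{Aut}(E)$ acts linearly, fixing the origin, which maps to $[E]$. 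Hence $[E]$ is a smooth point of $\mathcal{M}$ precisely when this quotient is smooth at the image of $0$.

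One inclusion is then immediate: if $E$ is regularly stable, $\mathrm{Aut}(E)\,=\,Z$ acts trivially, the local model is the vector space $H^1(X,\,\mathrm{ad}(E))$ itself, and $[E]$ is smooth; so the (open, dense) regularly stable locus is contained in the smooth locus, and the content is the reverse inclusion. Suppose then that $[E]$ is a smooth point. If $E$ is stable but $\Gamma\,:=\,\mathrm{Aut}(E)/Z$ is a nontrivial finite group, then $V/\Gamma$ is smooth with $V\,:=\,H^1(X,\,\mathrm{ad}(E))$, so by the Chevalley--Shephard--Todd theorem $\Gamma$ is generated by pseudo-reflections. But for $\gamma\,\in\,\Gamma\setminus\{1\}$, decomposing $\mathrm{ad}(E)$ into the eigenbundles $W_\zeta$ of $\langle\gamma\rangle$ gives $V^{\gamma}\,=\,H^1(X,\,W_1)$, and since $\gamma$ is a non-central, finite-order (hence semisimple) automorphism, $\bigoplus_{\zeta\neq 1}W_\zeta$ has even positive rank and, by self-duality of $\mathrm{ad}(E)$, total degree $0$; so Riemann--Roch forces $\mathrm{codim}_V V^{\gamma}\,\geq\,2(g-1)\,\geq\,2$, no element is a pseudo-reflection, and we reach a contradiction. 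Thus this case cannot occur for any $g\,\geq\,2$.

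The remaining possibility is that $E$ is strictly polystable, so $E\,=\,E_L\times^L G$ for the Levi factor $L$ of a proper parabolic and $\mathrm{Aut}(E)$ is positive-dimensional; a generic one-parameter subgroup of the connected center of $L$ then acts on $V$ with fixed subspace $H^1(X,\,\mathrm{ad}(E_L))$ and with complementary weight spaces forming $H^1$ of $E_L\times^L(\mathfrak{g}/\mathfrak{l})$, a bundle of degree $0$ and rank twice the dimension of the nilradical. Working out this weight decomposition, one finds that $V/\!\!/\mathrm{Aut}(E)$ acquires a cone singularity at the image of $0$ unless this $h^1$ is as small as Riemann--Roch permits, which happens only for $g\,=\,2$ and a one-dimensional nilradical --- i.e.\ a $\mathrm{PSL}(2,\mathbb{C})$-factor of $G/Z$. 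In that situation the local model can genuinely be smooth while $E$ fails to be stable (as at a strictly semistable $\mathrm{SL}(2)$-factor of an otherwise regularly stable bundle), which is exactly why the proposition excludes $g\,=\,2$ together with a $\mathrm{PSL}(2,\mathbb{C})$-factor.

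I expect this last case to be the main obstacle. When $\mathrm{Aut}(E)$ is large one cannot reduce to a subtorus, since the quotient by the full reductive stabilizer can be much more regular than the quotient by a torus inside it --- for instance $(\mathfrak{sl}_2\oplus\mathfrak{sl}_2)/\!\!/\mathrm{SL}(2,\mathbb{C})\,\cong\,\mathbb{A}^3$, which is precisely what keeps even the most degenerate strictly semistable points of $\mathcal{M}_{\mathrm{SL}(2,\mathbb{C})}$ on a genus-$2$ curve smooth. So one must run through the possible pairs $(L,E_L)$ carefully, and the delicate step is to confirm that $g\,=\,2$ with a $\mathrm{PSL}(2,\mathbb{C})$-factor of $G/Z$ is the only configuration in which the local model stays smooth; for this I would invoke the classical identification of Narasimhan and Ramanan (and its adjoint-group analogue) of the moduli of semistable rank-two bundles of trivial determinant on a genus-$2$ curve with $\mathbb{P}^3$ --- smooth even along the strictly semistable locus --- rather than hope for a purely formal argument. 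Finally one should account for the central torus of $G$: the cleanest route is to split off the abelian part of $\mathcal{M}$, an abelian variety contributing no singularities, and run the argument above for the semisimple quotient $G/Z$.
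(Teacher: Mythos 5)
The paper offers no proof of this statement at all: it is imported verbatim from \cite[Proposition 2.3]{BHo}, so there is nothing internal to compare with and your attempt must be judged on its own. The first half of what you write is sound and is essentially the standard route to this result: the Luna-slice local model $H^1(X,\mathrm{ad}(E))\,/\!\!/\,\mathrm{Aut}(E)$ at a polystable point, triviality of the $\mathrm{Aut}(E)=Z$ action at a regularly stable bundle, and, for $E$ stable but not regularly stable, the Chevalley--Shephard--Todd argument. Your codimension estimate is correct: a non-central automorphism of finite order modulo $Z$ is fiberwise semisimple, so the non-fixed part of $\mathrm{ad}(E)$ has even positive rank and degree zero, and Riemann--Roch gives $\mathrm{codim}_V V^\gamma\geq 2(g-1)\geq 2$, which rules out pseudo-reflections (and trivial actions), so these points are singular for every $g\geq 2$.

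The genuine gap is the strictly polystable case, which is precisely where the hypotheses ``$g\neq 2$ or no $\mathrm{PSL}(2,\mathbb{C})$-factor of $G/Z$'' enter, and you concede it yourself. Restricting to a one-parameter subgroup of the center of the Levi cannot detect smoothness or singularity of $V\,/\!\!/\,\mathrm{Aut}(E)$ --- your own example $(\mathfrak{sl}_2\oplus\mathfrak{sl}_2)\,/\!\!/\,\mathrm{SL}(2,\mathbb{C})\cong\mathbb{A}^3$ shows why --- and the claim that the quotient ``acquires a cone singularity unless $g=2$ and the nilradical is one-dimensional'' is exactly the assertion that needs proof, not a consequence of anything you have established. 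Appealing to Narasimhan--Ramanan only explains why the excluded case must be excluded; it does not show singularity in all the remaining configurations $(L,E_L)$. To close the argument you would need an actual analysis of the slice representation for an arbitrary reductive stabilizer (for instance exhibiting invariants forcing a tangent-space jump at the image of the origin, uniformly over the possible Levi reductions), which is the content of the cited result; as written, the implication ``smooth $\Rightarrow$ regularly stable'' is only proved at stable points. The concluding reduction of a positive-dimensional center by ``splitting off the abelian part'' is likewise asserted rather than argued, though that step is comparatively routine.
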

 
Now let us restrict to the fixed point locus   ${\mathcal M}_{\sigma_X}$ of $I_\sigma $;  each component of the smooth locus of the moduli of stable real (or pseudo-real) 
bundles has associated to it a unique topological type of real (or pseudo-real) structure. One then has a 
lower bound:

\begin{proposition} Let $g\,> \,2$, and suppose  ${\rm PSL}(2,{\mathbb C})$ is not a factor of $G/Z$. The number of components of the smooth locus of ${\mathcal M}_{\sigma_X}$ is bounded below  by the number of possible topological types. 
\end{proposition}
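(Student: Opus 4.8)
The plan is to exhibit a surjection from the set of components of the smooth locus of ${\mathcal M}_{\sigma_X}$ onto the set of topological types of pseudo-real structures (for the given real form $\sigma_G$), from which the bound follows immediately. First I would observe that, under the stated hypotheses ($g>2$ and ${\rm PSL}(2,\mathbb C)$ not a factor of $G/Z$), the cited Proposition guarantees that the smooth locus of ${\mathcal M}$ consists of regularly stable bundles, so every point of the smooth locus of ${\mathcal M}_{\sigma_X}$ represents a bundle $E$ carrying a lift $\sigma_E$ of $\sigma_X$, and by the remark preceding the statement this lift is unique up to automorphism; hence a well-defined topological type $t(E,\sigma_E)$ is attached to each such point.

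Next I would check that this type is locally constant on the smooth locus of ${\mathcal M}_{\sigma_X}$: a path of regularly stable fixed points gives a continuous family of pseudo-real bundles, and the topological type — which, by the classification in Sections 2--3, is a discrete datum ($c\in H^2(\mathbb Z/2\mathbb Z, Z)$, the classes $\alpha_i\in H^1_c(\mathbb Z/2\mathbb Z,G)$, the generalized Stiefel--Whitney classes $\beta_i$, and the relative class $\rho$) — cannot jump in a connected family. Thus $t$ is constant on each connected component of the smooth locus of ${\mathcal M}_{\sigma_X}$, giving a map from the set of components to the set of topological types.

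It then remains to see that this map is surjective, i.e.\ that every topological type is actually realized by some regularly stable pseudo-real bundle. Here I would invoke the gauge-theoretic results of Section~5: for each topological type $t$ one has the connected nonempty moduli space ${\mathcal M}^\sigma_t$ of $\sigma_G$-semistable pseudo-real bundles of type $t$, and (since $g>2$) a generic such bundle is stable, indeed — again using that ${\rm PSL}(2,\mathbb C)$ is not a factor of $G/Z$ and $g\neq 2$ — regularly stable. Such a bundle, forgetting $\sigma_E$, maps to a point of the smooth locus of ${\mathcal M}$ fixed by $I_\sigma$, hence to a point of the smooth locus of ${\mathcal M}_{\sigma_X}$ whose associated type is $t$. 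This shows every type is hit, so the number of components is at least the number of types.

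The main obstacle I expect is the surjectivity step: one must be sure that ${\mathcal M}^\sigma_t$ genuinely contains a \emph{regularly} stable bundle, not merely a stable one, since the dictionary between the fixed-point locus and pseudo-real bundles (uniqueness of the lift up to automorphism, well-definedness of $t$) was only established on the regularly stable — i.e.\ smooth — locus. This requires knowing that the regularly stable locus is dense in ${\mathcal M}^\sigma_t$, which should follow from the density of regularly stable bundles in the complex moduli under the genus and factor hypotheses together with the fact (Proposition~\ref{prop1}) that real (semi/poly)stability is governed by the underlying complex bundle; a secondary, more bookkeeping-type point is to confirm that the discrete invariants listed above are exactly the equivariant homotopy invariants, so that "locally constant type" is literally the statement that distinct components carry distinct types at most finitely often — but this is precisely the content of the classification theorem of Section~3 and needs only to be cited.
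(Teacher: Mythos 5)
Your proposal is correct and takes essentially the same route as the paper, whose own justification is just the preceding discussion: at a regularly stable fixed point the lift of $\sigma_X$ is unique up to automorphism, so a well-defined (discrete, hence locally constant) topological type is attached to each component of the smooth locus of ${\mathcal M}_{\sigma_X}$, and every type occurs because the corresponding invariant moduli space ${\mathcal M}^\sigma_t$ is nonempty and connected. The realizability point you flag --- that each ${\mathcal M}^\sigma_t$ must actually contain a \emph{regularly} stable bundle --- is left implicit in the paper as well, so raising it (and resolving it via density of the regularly stable locus under the hypotheses $g>2$ and no ${\rm PSL}(2,{\mathbb C})$ factor) is a reasonable sharpening rather than a deviation.
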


For ${\rm GL}(n,
{\mathbb C})$, we 
note that regularly stable and stable are the same; furthermore, if the degree and rank 
are coprime, then stability and semistability coincide, and our lower bound becomes a 
count.

{\it Example: ${\rm GL}(n,{\mathbb C}), \sigma_G(g) = \overline{g}$}:
For example, for vector bundles of rank $n$, and the standard real structure, for type I curves with r real curves, one has $2^r$ possibilities for the Stiefel-Whitney classes, and one extra possibility, the quaternionic bundles, in even rank. There is a constraint for real bundles that the sum of the Stiefel-Whitney classes must equal the degree, mod 2. For quaternionic bundles, the degree must be even. We find as a number of components (see Schaffhauser \cite{Sch}: 
\begin{itemize}
\item {\it r odd, rank odd, degree odd}: $\begin{pmatrix}r\\ 1\end{pmatrix} + \begin{pmatrix}r\\ 3\end{pmatrix} +\ldots + \begin{pmatrix}r\\ r\end{pmatrix} = 2^{r-1}$

\item {\it r odd, rank odd, degree even}: $\begin{pmatrix}r\\ 0\end{pmatrix} + \begin{pmatrix}r\\ 2\end{pmatrix} +\ldots+ \begin{pmatrix}r\\ r-1\end{pmatrix}= 2^{r-1}$
\item {\it r odd, rank even, degree odd}: $2^{r-1}$.
\item {\it r odd, rank even, degree even}: $2^{r-1}+1$ (There is the additional quaternionic moduli space)
\end{itemize}
with similar results for $r$ even; one can also perform a similar analysis for type 0 or II curves.

{\it Example: ${\rm GL}(n,{\mathbb C}), \sigma_G(g) = (g^*)^{-1}$}: Again on type I curves, 
for principal ${\rm GL}(n,{\mathbb C})$ bundles with the real structures $\sigma_G(g)\,=\,(g^*)^{-1}$, we find 
that the topological types over each real component are defined by a signature lying in 
the set $0,\cdots,n$. This implies that there are $r^{n+1}$ possibilities over the real 
curves, each giving a component. The bundles must all be of even degree.

One of the main results of the theory, for $G$ semi-simple, is that polystable bundles 
correspond to flat connections and so to representations into the fundamental group. One 
can ask what representations are given by bundles left invariant by the real involution. 
The answer is given in \cite{BH}; we recall it briefly here.

 Fix
a base point $x\, \in\, X$ such that $\sigma(x)\, \not=\, x$. Let
$\Gamma$ be the space of all homotopy classes of paths on $X$
starting from $x$ and ending in either $x$ or $\sigma_X(x)$.
This set $\Gamma$ has a natural structure of a group, using the involution on $X$ to compose paths if necessary. In
turn, let $\widehat K$ be the $\bbz/2\bbz$ extension built from the automorphism $\sigma_G$.
Let ${\rm Hom}'(\Gamma\, ,\, \widehat{K})$ be the space of
all homomorphisms $\varphi\, :\, \Gamma\, \longrightarrow\,
\widehat{K}$ that fit in the commutative diagram
$$
\begin{matrix}
0 & \longrightarrow & \pi_1(X,x)& \longrightarrow & \Gamma &
\longrightarrow & {\mathbb Z}/2{\mathbb Z} &
\longrightarrow & 0\\
&& \Big\downarrow && ~\, \Big\downarrow \varphi && \Vert \\
0 & \longrightarrow & K & \longrightarrow & \widehat{K} &
\longrightarrow & {\mathbb Z}/2{\mathbb Z} &
\longrightarrow & 0
\end{matrix}
$$

One shows that real (invariant under the involution) bundles give rise to 
representations fitting into this diagram, and vice versa. The pseudo-real bundles 
give rise to ``twisted'' representations; see \cite{BH} for details.

Let us consider real bundles. We have now an understanding of the different 
components of the representation space; these are in essence given in terms of (1) 
a choice of classes $\alpha_i\in H^1(\bbz/2\bbz,\, G)$ represented by 
$h_{\alpha_i}$ for each real component $\gamma_i$ of $X$; (2) a choice of a 
generalized Stiefel-Whitney class $w(\gamma_i) \in \pi_0(Stab(h_i))$ for each 
$C_i$; (3) a choice of degree for the bundle, which is usually determined modulo 2 
and here must vanish as the group $G$ is semi-simple. One can ask what 
characterizes representations for these components.

We note that $h_{\alpha_i}$ can be chosen to lie in $K$, and are either the 
identity or elements of order two. The constraint on connections and so on 
representations imposed by the component is then fairly immediate: one has that the 
holonomy along the curves $\gamma_i$ must lie in the group $Stab(h_{\alpha_i})\cap 
K$, and so if one considers curves $\widetilde\gamma_i \,=\,p^{-1}\circ\gamma_i\circ p $, 
where $p$ is a path from the base point to a point of $\gamma_i$, these must map to 
a conjugate of the stabilizer. Likewise, the Stiefel-Whitney class is determined by 
the connected component of $Stab(h_{\alpha_i})\cap K$ in which the holonomy lies, 
after conjugation. A similar analysis is possible in the pseudo-real case.

\section*{Acknowledgements}

We thank the referee for detailed comments. The first-named author thanks ICMAT for 
hospitality where a part of the work was carried out. He also acknowledges the 
support of a J. C. Bose Fellowship.


\end{document}